\newcounter{dummy} \numberwithin{dummy}{section}
\newtheorem{theorem}[dummy]{Theorem}
\newtheorem{corollary}[dummy]{Corollary}
\newtheorem{definition}[dummy]{Definition}
\newtheorem{proposition}[dummy]{Proposition}
\theoremstyle{remark}
\newtheorem{remark}[dummy]{Remark}
\newtheorem{example}[dummy]{Example}
\newcommand{\calH}{\mathcal{H}}
\newcommand{\calV}{\mathcal{V}}
\newcommand{\calN}{\mathcal{N}}
\newcommand\E{\mathbb{E}}
\DeclareMathOperator{\Ann}{Ann}
\DeclareMathOperator{\Sym}{Sym}
\DeclareMathOperator{\pr}{pr}
\DeclareMathOperator{\rank}{rank}
\DeclareMathOperator{\spn}{span}
\DeclareMathOperator{\tr}{tr}
\newcommand{\ptr}{/ \! /}
\DeclareMathOperator{\Ad}{Ad}
\DeclareMathOperator{\GL}{GL}
\DeclareMathOperator{\gl}{\mathfrak{gl}}
\DeclareMathOperator{\SO}{SO}
\DeclareMathOperator{\Ort}{O}
\DeclareMathOperator{\so}{\mathfrak{so}}
\DeclareMathOperator*{\argmin}{argmin}
\DeclareMathOperator{\vl}{vl}
\DeclareMathOperator{\sn}{sn}
\DeclareMathOperator{\dn}{dn}
\DeclareMathOperator{\diag}{diag}
\newcommand{\ve}{\varepsilon}
\numberwithin{equation}{section}
\title[
Most probable paths for anisotropic Brownian motions]{Most probable paths for anisotropic Brownian motions on manifolds
}
\author[E.~Grong and S.~Sommer]{Erlend Grong and Stefan Sommer}
\address{University of Bergen, Department of Mathematics, P.O.~Box 7803, 5020 Bergen, Norway}
\email{erlend.grong@uib.com}
\address{University of Copenhagen, Universitetsparken 5, CK-2100 Copenhagen E, Denmark}
\email{sommer@di.ku.dk}
\thanks{The first author is supported by the grant GeoProCo from the Trond Mohn Foundation - Grant TMS2021STG02 (GeoProCo). The second author is supported by the Villum Foundation grant 
00022924, and the Novo Nordisk Foundation grant NNF18OC0052000.}
\subjclass[2010]{62R30, 60D05, 53C17}
\keywords{Statistics on Riemannian manifolds, diffusion mean, anisotropic Brownian motion, most probable paths.}
\begin{document}

\begin{abstract}
Brownian motion on manifolds with non-trivial diffusion coefficient can be constructed by stochastic development of Euclidean Brownian motions using the fiber bundle of linear frames. We provide a comprehensive study of paths for such processes that are most probable in the sense of Onsager-Machlup, however with path probability measured on the driving Euclidean processes. 
We obtain both a full characterization of the resulting family of most probable paths, reduced equation systems for the path dynamics where the effect of curvature is directly identifiable, and explicit equations in special cases, including constant curvature surfaces where the coupling between curvature and covariance can be explicitly identified in the dynamics. We show how the resulting systems can be integrated numerically and use this to provide examples of most probable paths on different geometries and new algorithms for estimation of mean and infinitesimal covariance.
\end{abstract}

\maketitle

\section{Introduction}
The Eells-Elworthy-Malliavin \cite{malliavin_stochastic_1976} construction of Brownian motion applies stochastic development to map Euclidean Brownian motions to Riemannian manifolds. The construction uses a Stratonovich SDE in the orthonormal frame bundle to generate the manifold valued processes. A more general class of processes can be constructed by relaxing the requirement of the SDE to start with an orthonormal frame. This corresponds to choosing a diffusion coefficient with non-trivial covariance between the infinitesimal steps of the process. Such processes have been studied in geometric statistics where they, for example, are used to generate probability distributions on manifolds that can be interpreted as normal distributions \cite{sommer_anisotropic_2015,sommer_modelling_2017}. These distributions and the corresponding generating processes are termed anisotropic due to their directionally dependent diffusion, and the probability of observing a given point on the manifold is influenced by this anisotropy which weighs the probability of paths from the starting point to the data.

The most probable paths for a Brownian motion from its starting point to a fixed end point can be described as extremal values of the Onsager-Machlup functional \cite{fujita_onsager-machlup_1982}. This notion is generalized to the anisotropic case in \cite{sommer_modelling_2017} by measuring the path probability on the driving process, i.e. the Euclidean Brownian motion that is mapped by stochastic development to the manifold. The resulting family of paths are geodesics for a sub-Riemannian structure on the frame bundle \cite{sommer_anisotropically_2016}. In this paper, we give an in-depth study of the family of most probable paths. We fully characterize the family and show that the family is a subset of normal sub-Riemannian geodesics. We derive a reduced system governing the dynamics of the family and make the influence of curvature on the dynamics explicit. We link this to qualitative aspects of the family, particularly showing how the paths bend towards directions of high-variance with positive curvature, and low-variance with negative curvature, when compared to a Riemannian geodesic connecting their endpoints. We furthermore derive new algorithms for mean and infinitesimal covariance estimation using the paths, and show how efficient optimization can be obtained using both properties of the systems - particularly equivariance to scaling of the covariance - and using automatic differentiation as opposed to direct evaluation of adjoint equations.
\begin{figure}[h]
    \centering
    \begin{subfigure}[b]{\textwidth}
        \centering
        \includegraphics[trim=200 240 200 200,clip,width=0.35\linewidth]{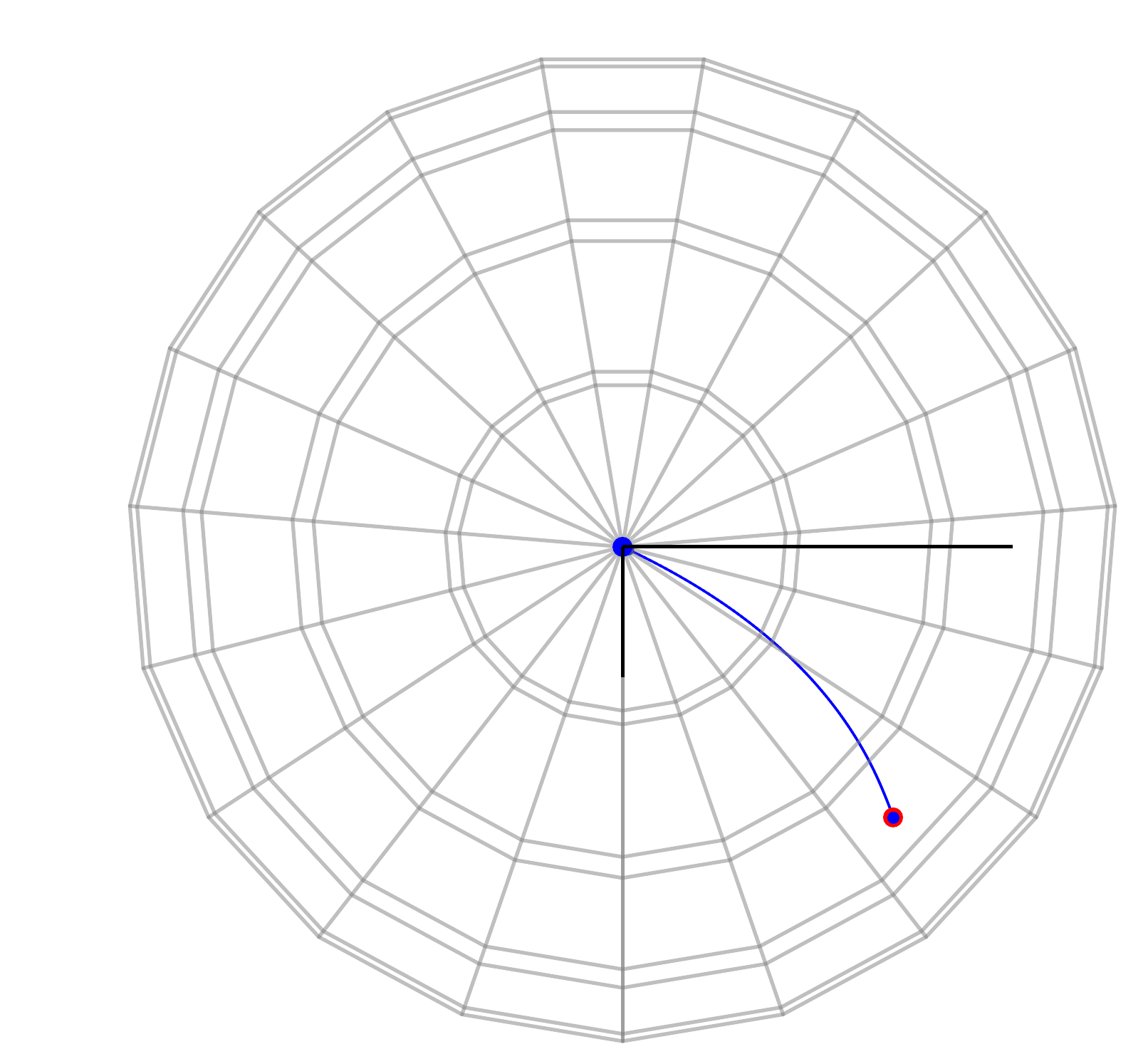}
        \hspace{1cm}
        \includegraphics[trim=200 280 200 200,clip,width=0.35\linewidth]{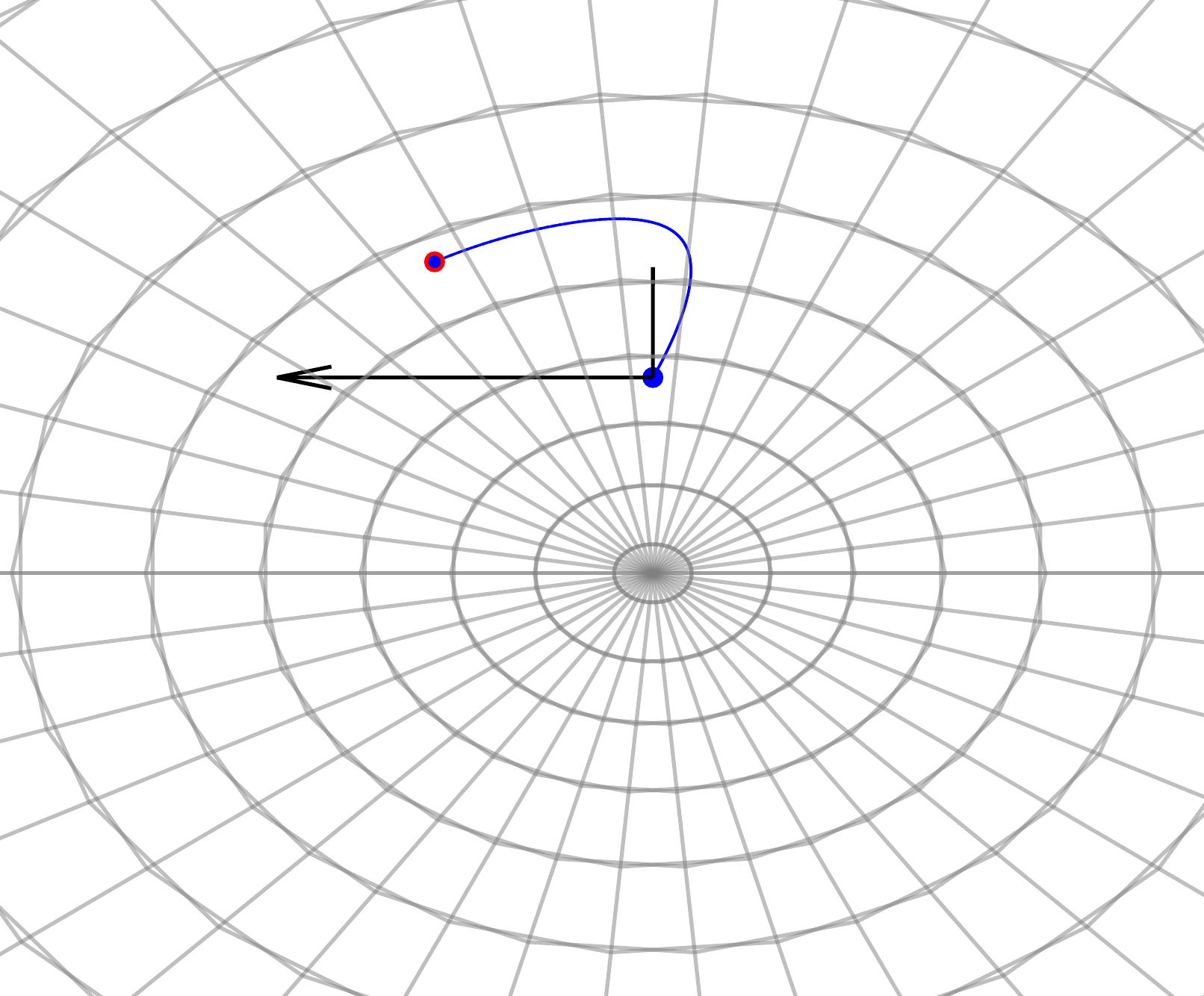}
    \end{subfigure}
    \caption{Examples of most probable paths on the sphere $S^2$ (left, view from a pole) and the hyperbolic space $H^2$ (right). The corresponding frames encoding the diffusion covariance is shown above the starting points (black lines, longer implies higher covariance). Note that compared to geodesics connecting endpoint, the most probable paths bend towards the large-covariance direction with positive curvature and to the low-covariance direction with negative curvature.}
    \label{fig:intro}
\end{figure}

\subsection{Background}
Most probable paths for anisotropic diffusion processes were defined in \cite{sommer_modelling_2017} where the Onsager-Machlup \cite{fujita_onsager-machlup_1982} functional was used to characterize the path probability from the Euclidean Brownian motion that drives the evolution. The paths were further studied in \cite{sommer_evolution_2015} as projections of sub-Riemannian geodesics on the frame bundle $FM$ with the assumption of normality. The resulting Hamiltonian system provides only sufficient conditions for the paths, and it has not been proved that the normality assumption holds. Results in \cite{sommer_evolution_2015} show experimentally that the anisotropy affects the dynamics of the paths, meaning that they travel more in directions of largest variance than geodesic connecting the endpoints. In this paper, we prove this hypothesis in the constant positive curvature case, and show that this does not hold - in fact the effect is opposite - with constant negative curvature, see Figure~\ref{fig:intro}. Estimators based on most probable paths have been used in statistical applications in e.g.~\cite{sommer_anisotropic_2015,sommer_infinitesimal_2018-1}.

\subsection{Outline}
We start with a brief survey of anisotropic processes on manifolds, their application in geometric statistics, and the relation between geodesic distances, most probable paths, and least-squares constructions exemplified by the Fr\'echet mean. Section~\ref{sec:GeometryFrame} outlines the stochastic process and frame bundle theory used in the paper. In Section~\ref{sec:distance}, we define the sub-Riemannian structures on $\Sym^+TM$, $FM$ and $OM$ that encode infinitesimal covariance, we define path probability and quantify the effect on estimators when varying the total variance. Section~\ref{sec:dynamics} contains the dynamical equations for most probable paths and consequences of the derived system. We study most probable paths in specific examples in Sections~\ref{sec:surfaces} and~\ref{sec:examples}.
We further discuss numerical implementation of the systems and provide algorithms for estimation of mean and covariance using most probable paths in Section~\ref{sec:algorithms}.

\section{Anisotropic distributions, geometric statistics, and most probable paths} \label{sec:background}
We here give a short survey of the relation between mean estimation, infinitesimal covariance, and most probable paths in geometric statistics. Particularly, this leads to most probable paths as extremals for an objective function that generalize the Fr\'echet variance. We get estimators for diffusion means in the presence of non-trivial covariance. These estimators depend on the covariance-weighed length of the most probable paths.

\subsection{Mean values on Riemannian manifolds}
Among the most fundamental constructions in geometric statistics, the statistical analysis of manifold valued data~\cite{pennec_intrinsic_2006}, is the Fr\'echet mean \cite{frechet_les_1948}, defined as the set of minimizers of the expected square distance to a random variable $X$ on a Riemannian manifold $M$ with metric $g$ and distance $d_g$:
\begin{equation}
  E(X)
  =
  \argmin_{x\in M}\E[d_g(X,x)^2]
  \label{eq:Frechet_mean}
  \ .
\end{equation}
Unlike the Euclidean mean which can be defined in multiple equivalent ways, Riemannian manifolds have several non-equivalent notions of mean values. The diffusion mean \cite{hansen_diffusion_2021,hansen_diffusion_2021-1} is based on the characterization of the Euclidean mean value as the most likely center point of normal distributions. Distributions generated by Riemannian Brownian motions can be seen as manifold generalizations of Euclidean normal distributions. This results in the diffusion mean set
\begin{equation}
  E_t(X)
  =
  \argmin_{x\in M}\E[-\ln p_t(X;x)]
  \label{eq:diffusion_mean}
\end{equation}
where $p_t(\cdot;x)$ is the density of a Brownian motion started at $x\in M$ and evaluated at time $t>0$. Because $\lim_{t\to 0}-2t\ln p_t(y;x)=d_g(y;x)^2$, diffusion means are linked to the Fr\'echet mean in the $t\to0$ limit. For fixed positive $t$, the $2t$ factor does not affect the minima, and, under weak conditions, the sets $E_t(X)$ converge to $E(X)$ as $t\to 0$ \cite{hansen_diffusion_2021}. However, for larger $t$, the sets can deviate substantially and the Fr\'echet and diffusion means can have qualitatively different behaviours.

\subsection{Data anisotropy}
\label{sec:data_anisotropy}
The diffusion mean \eqref{eq:diffusion_mean} gives rise to the question of what happens if the manifold normal distributions that are fitted to data by maximum likelihood in \eqref{eq:diffusion_mean} have non-trivial covariance. This case is not covered by distributions generated by Brownian motions that by construction are isotropic since Brownian motions diffuse equally in all directions. To treat this question, \cite{sommer_anisotropic_2015,sommer_modelling_2017} defined anisotropic normal distributions on manifolds using Brownian-like processes, however with directionally dependent diffusion coefficients.
We write $p_t(y;x, \Sigma)$ for the density of such a process, where $x, y \in M$, $x$ represents the mean of the distribution and $\Sigma \in \Sym^+ T_x M$ is a symmetric, positive definite linear map from $T_xM$ to itself representing the covariance. We will give the construction of these densities in Section~\ref{sec:stochastics}. For a random variable $X$, the diffusion mean and covariance $(x,\Sigma)$ can now be found simultaneously by minimizing the negative log-likelihood
\begin{equation}
(  x, \Sigma) =\argmin_{(\tilde x, \tilde \Sigma) \in \Sym^+ TM}\E[-\ln p_t(X;\tilde x, \tilde \Sigma)].
  \label{eq:EtSigma}
\end{equation}
The diffusion principal component analysis (diffusion PCA) construction \cite{sommer_infinitesimal_2018-1} continues this idea by employing a maximum likelihood fit of such distributions to give a generalization of PCA to manifolds.

As for the link between the (isotropic) diffusion mean and the Fr\'echet mean, one can study the $t\to 0$ limit of \eqref{eq:EtSigma}. This turns out to have a least-squares formulation similar to \eqref{eq:Frechet_mean}, however with the squared Riemannian distance $d_g(\cdot,x)^2$ replaced by a function $d_\rho(\pi^{-1}(\cdot),(x,\Sigma))^2$ that arises from a sub-Riemannian distance on the bundle of symmetric positive definite endomorphisms of $TM$ or, alternatively, the frame bundle of $M$:
\begin{equation}
  \lim_{t\to 0}-2t\ln p_t(y;x,\Sigma)=d_\rho(\pi^{-1}(y),(x,\Sigma))^2.
\end{equation}
Here $\pi$ is the fiber bundle projection. The distance in $d_\rho$ was defined in \cite{sommer_modelling_2017}. We will revisit the definition in Sections~\ref{sec:GeometryFrame} and~\ref{sec:distance}.
The limit generalizes the standard Brownian motion small-time limit
$\lim_{t\to 0}-2t\ln p_t(y;x)=d_g(y,x)^2.$
One can therefore approximate the objective $-\ln p_t(X;x,\Sigma)$ in \eqref{eq:EtSigma} with $\frac12d_\rho(\pi^{-1}(X),(x,\Sigma))^2$. As above, for $t>0$, the factor $2t$ does not affect the minima.

Paths realizing $d_\rho(\pi^{-1}(y),(x,\Sigma))^2$ are in a certain sense most probable for the anisotropic diffusion processes 
%and as such take a role comparable to geodesics that realize the Riemannian distance $d_g$: Paths realizing $d_\Sigma$ 
and thus give the most probable ways of getting from the mean to observed data points. 
The cost $d_\rho(\pi^{-1}(y),(x,\Sigma))^2$ and paths realizing it thus take a dual role in both approximating the objective of \eqref{eq:EtSigma} for small $t$ and in being most-probable for the diffusion process for any $t>0$. We use both roles in the forthcoming sections.

\subsection{Sample estimators}
%We now turn to estimation of the diffusion mean \eqref{eq:EtSigma} with anisotropic covariance and discuss how the covariance enter via a sub-Riemannian structure on $FM$ in sample estimators.
Let now $y_1,\dots,y_n$ be i.i.d. samples on the manifold $M$. Following \cite{sommer_modelling_2017}, consider the sample estimator 
\begin{equation}
  %\argmin_{\tilde \Sigma \in \Sym^+ TM} \frac1N \sum_{j=1}^n \left( -\ln p(y_j; \tilde \Sigma) \right),
  \argmin_{(x,\Sigma)} \frac1n \sum_{i=1}^n \left( -\ln p_t(y_i; x,\Sigma) \right),
  \label{eq:Sigma_est}
\end{equation}
of the diffusion mean \eqref{eq:EtSigma}. Again, since the density $p_t(y_i; x,\Sigma)$ generally is complex to approximate computationally, we can use the small-$t$ limit that suggests the approximation \cite{sommer_modelling_2017},
\begin{equation}
  \argmin_{(x,\Sigma)} \frac1{2n} \sum_{i=1}^n \left( d_\rho(\pi^{-1}(y_i), (x,\Sigma))^2+\ln\det_g \Sigma \right)
  \ .
  \label{eq:sample_est} 
\end{equation}
%The paths realizing the distances $d_{\rho}(\pi^{-1}(y_j), \tilde \Sigma)^2$ between the covariance candidate $\tilde\Sigma$ and the symmetric tensors above samples $y_j$ will be the main focus of the rest of the paper.
Again, most probable paths arise as the paths realizing the objective of the sample estimator.
The term $\ln\det_g \Sigma$ prevents $\Sigma$ from being arbitrarily large and thereby $d_\rho(\cdot, (x,\Sigma))$ going to zero. It corresponds to the normalization factor of the Euclidean normal distribution, and it can be interpreted as the difference between the volume forms defined by the Riemannian distance on $M$ and the sub-Riemannian distance on the bundle of symmetric positive definite endomorphisms of $TM$ determined by $\Sigma$.

\section{Frame bundle geometry and stochastic development} \label{sec:GeometryFrame}
To establish the geometric foundation for the study of most probable paths, we give a short introduction to some of the intrinsic structures that exists on the frame bundle of a Riemannian manifold and that will be used in the paper. Following this, we discuss the stochastic development procedure and its use in defining stochastic processes on manifolds.
\subsection{The frame bundle of a Riemannian manifold}
%This section also allow us to introduce the notation that we will use subsequently
Frame bundles of Riemannian manifolds, made by enlarging the manifolds with all possible choices of bases for its tangent spaces, have two distinctive advantages. Firstly, not only does the frame bundle have a trivializable tangent bundle, it comes with a canonical choice of basis. Such a choice of basis is very useful for introducing development of stochastic processes. The second advantage is that Lie brackets of vector fields in this canonical basis can explicitly described using geometric invariants, which will be very useful for the proof of the equations for the most probable paths in Theorem~\ref{th:ProbPath}. We refer to \cite{Sha97,Hsu02} for more details.

In the discussions below, $\mathbb{R}^d$ will always be the $d$-dimensional Euclidean space and with the standard basis $e_1, \dots, e_d$. We define $\GL(d)$ as the Lie group of all invertible $d \times d$ matrices with usual matrix multiplication. Its Lie algebra $\gl(d)$ consist of all $d \times d$-matrices with the usual commutator bracket of matrices.

Let $M$ be a $d$-dimensional differentiable manifold. For any $x \in M$, consider $\GL(T_xM)$ as the space of all linear isomorphisms $u: \mathbb{R}^d \to T_xM$. Such a map can be identified with a choice of basis $u_1, \dots, u_d$ of $T_xM$ by $u_j = u(e_j)$. The frame bundle $FM = \GL(TM)$ is then the principal bundle over $M$,
$$\GL(d) \to FM \stackrel{\pi}{\to} M, \qquad FM_x = \GL(T_xM) = \pi^{-1}(x),$$
where $\GL(d)$ acts on each fiber by composition on the right. In other words, if $u \in FM_x$ corresponds to the basis $u_1, \dots, u_d$ of $T_xM$, then $u \cdot q$ corresponds to the basis $\sum_{i=1}^d u_{i} q_{i1}, \dots, \sum_{i=1}^d u_{i} q_{id}$ for any $q \in \GL(d)$.

(
Using the action of $\GL(d)$, we can associate a vector field $\xi_A$ for each $A \in \gl(d)$ by
$$\textstyle \xi_A|_u = \frac{d}{dt} u \cdot e^{tA} |_{t=0}, \qquad A \in \gl(d).$$
At each point, this is a derivative of a rotation in the fiber, and hence these get annihilated by the differential $\pi_*$ of $\pi$ as $\pi(u \cdot e^{tA}) = \pi(u)$. In fact, the vector fields $\xi_{A}$, $A\in \gl(d)$ span the vertical bundle $\calV := \ker \pi_*$ of $\pi: FM \to M$ and have bracket relations
$$[\xi_A, \xi_B] = \xi_{[A,B]}, \qquad A, B \in \gl(d).$$
We also have a tautological $\mathbb{R}^d$-valued one-form $\theta$ on $FM$ given by
$$\theta: w  \mapsto u^{-1} (\pi_* w), \qquad w  \in T_u FM.$$
In other words, $\theta$ is the result of taking a vector $w \in T_u FM$, looking at its projection $\pi_* w \in T_{\pi(u)} M$ and writing this vector in the basis $u$. Observe that the kernel of $\theta$ is exactly the vertical bundle $\calV$.

Introduce a Riemannian metric $g$ on $M$ with Levi-Civita connection~$\nabla$.
For every smooth curve $t \mapsto \gamma(t)$, there is a $\nabla$-parallel frame $u(t) \in FM_{\gamma(t)}$ uniquely determined by its value at an initial point. Let $\mathcal{H}$ be the set of derivatives of such curves. Then
\begin{equation} \label{H+V} TFM = \calH \oplus \calV,\end{equation}
since the derivative of any parallel frame is uniquely determined by the derivative of the underlying curve.
The subbundle $\calH$ is invariant under the group action, and we can hence define a corresponding principal connection $\omega:TFM\to \gl(d)$ given by
$$\calH = \ker \omega, \qquad \omega(\xi_A) = A.$$
Invariance under the group action means that $\calH_{u \cdot q} = \calH_u \cdot q$ for any $u \in FM$, $q \in GL(d)$, which in term can be expressed by the principal connection $\omega$ as the identity $\omega(w \cdot q) = \Ad(q^{-1}) \omega(w) = a^{-1} \omega(w) q$.

For any vector $v \in T_xM$ and $u \in FM_x$, define $h_uv \in \calH_u$ as the unique vector projecting to $v$. Similarly, for any vector field $V \in \Gamma(TM)$, define a vector field $hV \in \Gamma(TFM)$ by $hV|_u = h_u V|_{\pi(u)}$, which is called the horizontal lift. Finally, for an element $a \in \mathbb{R}^d$, we define the canonical horizontal vector field $H_a$ as the unique vector field satisfying
\begin{equation} \label{Xvector} \theta(H_a) = a, \qquad \omega(H_a) = 0.\end{equation}
If $a = \sum_{i=1}^d a_i e_i$, then $H_a$ is related to the previous mentioned horizontal lifts by
$$H_a|_u = \sum_{i=1}^d a_i h_u u_i = \sum_{i=1}^d a_i H_i,$$
where $H_i = H_{e_i}$.

The above definitions have the following local representation. Choose a local orthonormal basis $V_1, \dots, V_d$ of $TM$ and define a corresponding local trivialization
$$FM \cong M \times GL(d), \qquad u \in FM_x \mapsto (x, (u_{ij})), \qquad u_j = \sum_{i=1}^d u_{ij} V_i.$$
If we write $\nabla_{V_i} V_j = \sum_{k=1}^d \Gamma_{ij}^k V_k$ for the Christoffel symbols, then
\begin{align*}
\xi_A & \textstyle =  \sum_{i,j,r=1}^d u_{ir} A_{rj} \frac{\partial}{\partial u_{ij}}, \\
hV_k & \textstyle = V_k - \sum_{i,j,r=1}^d u_{rj} \Gamma_{kr}^i \frac{\partial}{\partial u_{ij}}, \\
H_a & \textstyle = \sum_{i,j=1}^d a_j u_{ij} hV_i .
\end{align*}
Write $R$ for the curvature of the Levi-Civita connection. Using the above formulas, we have the local identities
\begin{align*}
{[hV_k, hV_l]} & \textstyle = h[V_k, V_l] - \sum_{i,j, r=1}^d u_{ir} \langle u^{-1} R(V_k, V_l) u_j, e_r \rangle \frac{\partial}{\partial u_{ij}} ,\\
{[\xi_A, hV_l]} & =   0.
\end{align*}
Define $\underline{R}: FM \to (\mathbb{R}^{d,*})^{\otimes 3} \otimes \mathbb{R}^d$ as the scalarization of the curvature $R$, given by
$$\underline{R}(u)(a,b)c = u^{-1} R(u(a), u(b) ) u(c), \qquad u \in FM, a,b,c \in \mathbb{R}^d.$$
We use the previous local identities for the Lie brackets to give global formulas for our canonical basis of vector fields on $FM$
\begin{align} \label{Hxibrackets}
{[H_a, H_b]} & =- \xi_{\underline{R}(a,b)} & {[\xi_A, H_a]} & = H_{Aa} , \qquad a,b \in \mathbb{R}^d, A \in \gl(d).
\end{align}
The corresponding identities for forms are given as
\begin{align} \label{CartanEq}
d\omega + \frac{1}{2} [\omega, \omega] &= \Omega, & d\theta + [\omega , \theta] & = 0, 
\end{align}
where the curvature form $\Omega$ is a two-form with values in $gl(d)$ which vanishes on $\calV$ and satisfies
$$\Omega(H_a, H_b)|_u = \underline{R}(u)(a,b).$$

If we restrict ourselves to the orthonormal frame bundle $OM \subseteq FM$, then orthonormal frames remain orthonormal under parallel transport with respect to the Levi-Civita connection. Hence, the above formalism also makes sense if we only consider orthonormal frames. For this reason, by slight abuse of notation, we will use the symbols $\calH$, $\theta$ and $\omega$ for the restrictions of these to~$OM$.

\subsection{Stochastic processes and development} \label{sec:stochastics}
Let $t \mapsto B_t = (B_t^1, \dots, B_t^d)$ be the standard Brownian motion on $\mathbb{R}^d$, meaning in particular that $B_t$ is normally distributed as $\mathcal{N}(0, t1_d)$ for a fixed $t$.
Throughout this section, we will assume that $M$ is a compact manifold which by \cite{Shi82} will be sufficient for the solutions of the SDEs below on $FM$ to have infinite lifetime. See Remark~\ref{re:Lifetime} for the noncompact case.

Recall the definition of the frame bundle $FM$ of a $d$-dimensional Riemannian manifold $(M,g)$. For a given initial frame $u \in FM$, we define the process $t \mapsto U_t =  U_t(u)$ as the solution of the Stratonovich SDE,
\begin{equation}
dU_t  =  H_{\circ dB_t}|_{U_t} = \sum_{i=1}^d H_i|_{U_t} \circ dB_t^i, \qquad U_0 = u.
  \label{eq:development}
\end{equation}
Define $X_t(u) = \pi(U_t(u))$ as its projection to $M$. Here $X_{t}$ is the \emph{development} of~$B_t$, and the development can be reversed in the sense that $B_t$ can be found from~$X_t$ and the initial condition $u$. In this case, $B_t$ is denoted \emph{the anti-development} of~$X_t$. The stochastic development \eqref{eq:development} has a deterministic counterpart in the ODE
\begin{equation}
\dot u(t)  =  H_{v(t)}|_{u(t)} = \sum_{i=1}^d H_i|_{u(t)} \dot b^i(t), \qquad u(0) = u.
  \label{eq:development_det}
\end{equation}
for an absolutely continuous path $b(t)$ in $\mathbb{R}^n$ and with $u(t)$ being a parallel frame along a path $x(t)$. If $\ptr_t: T_x M \to T_{x(t)} M$ denotes parallel transport along $x(t)$, so that we may write $u(t) = \ptr_t u$, then this equation can be written as $\dot x(t) = \ptr_t u \dot b(t)$ with $u(t)$ then being determined by parallel transport.

Let $\Sym^+  TM$ denote positive definite symmetric endomorphisms of $TM$.
Define a map $\Sigma: FM \to \Sym^+ TM$, $u \mapsto \Sigma_u$ by
\begin{align*}
\langle u^{-1} w_1, u^{-1} w_2 \rangle & = \langle \Sigma_u^{-1} w_1, w_2 \rangle_g, \qquad u \in FM_x, w_1, w_2 \in T_x M.
\end{align*}
We observe that $\Sigma_u$ is always invertible and symmetric, and equals the identity on~$T_x M$ if $u$ is orthonormal. Furthermore, if $q \in \Ort(d)$, then $\Sigma_{u \cdot q} = \Sigma_u$. Similarly, since the Brownian motion is rotationally invariant, we can identify $X_t(u \cdot q)$ with $X_t(u)$ for $q \in O(d)$ and write $X_t(u) = X_t(\Sigma_u)$.

Next, let $\Sym^+ \mathbb{R}^d$ consist of symmetric, positive definite $d \times d$-matrices. We define a map $S: FM \to \Sym^+ \mathbb{R}^d$, $u \mapsto S_u$ by
$$\langle S_u^2 a, b \rangle = \langle u(a), u(b) \rangle_g, \qquad a, b \in \mathbb{R}^d.$$
We observe that $u^{-1} \Sigma_u u= S_u^2$. Furthermore, since $\langle u(t) a, u(t) b \rangle_g$ is constant in any parallel frame
\begin{equation} \label{HaS0} H_a S = 0, \qquad \text{for any $a\in \mathbb{R}^d$}.\end{equation}
Hence, for some fixed $S \in \Sym^+ \mathbb{R}^d$, if we define
$$F^SM = \{ \tilde u \in FM \, : \, S_{\tilde u} = S \},$$
then $U_t(u)$ takes values in $F^{S_u}M$.

There is a diffeomorphism $i_S$ from $F^S M$ to the orthonormal frame bundle $OM \subseteq FM$ given by
\begin{equation} \label{iS} i_S(u) = u \cdot S^{-1}. \end{equation}
Let us solve the following SDE on the orthonormal frame bundle. For any $S \in \Sym^+ \mathbb{R}^d$ and $f \in OM$, define $\hat U_t = \hat U_t(S,f)$ as the solution of
\begin{equation}
\label{tildeX}
d\hat U_t = H_{SdB_t} |_{\hat U_t} = \sum_{i,j=1}^d S_{ij} H_i |_{\hat U_t} \circ dB^j_t, \qquad \hat U_0 = f.
\end{equation}
Furthermore, we write $\hat X_t(S,f) = \pi(\hat U_t(S,f))$. Observe that for fixed $t$, $SB_t$ is distributed as $\mathcal{N}(0, tS^2)$. We also give the following observations.
\begin{proposition} \label{prop:YX}
\begin{enumerate}[\rm (a)]
\item For any $u \in FM$, the processes $X_t(u)$ and $\hat X_t(S_u, u \cdot S_u^{-1})$ are indistinguishable.
\item For any $\Sigma \in \Sym^+ T_x M$, $x \in M$, let $f \in OM_x$ be any orthonormal frame, and write $S^2 = f^{-1} \Sigma f$. Then $X_t(\Sigma)$ and $\hat X_t(S , f)$ are indistinguishable.
\end{enumerate}
\end{proposition}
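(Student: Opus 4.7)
The plan is to show that both parts follow from the observation that the process $U_t(u)$ stays in the sub-bundle $F^{S_u}M$ by \eqref{HaS0}, and that the diffeomorphism $i_{S_u}\colon F^{S_u}M\to OM$ of \eqref{iS} transports the SDE \eqref{eq:development} into \eqref{tildeX}. The only geometric ingredient required is the $\GL(d)$-equivariance of the canonical horizontal vector fields $H_a$.

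For part (a), I would first record the identity $(R_q)_* H_a|_u = H_{q^{-1} a}|_{u \cdot q}$ for $q \in \GL(d)$ and $a \in \R^d$, where $R_q(u) = u \cdot q$. This follows directly from the characterization \eqref{Xvector}: the push-forward is horizontal because $\calH$ is $\GL(d)$-invariant, and it projects to $u(a) = (u \cdot q)(q^{-1} a)$. Setting $S := S_u$ and applying $R_{S^{-1}}$ to the Stratonovich SDE \eqref{eq:development}, covariance of Stratonovich differentials under smooth maps gives, for $V_t := U_t(u) \cdot S^{-1}$,
\begin{equation*}
dV_t = \sum_{i=1}^d H_{S e_i}\big|_{V_t} \circ dB_t^i = H_{S \circ dB_t}\big|_{V_t}, \qquad V_0 = u \cdot S^{-1}.
\end{equation*}
Since $u \in F^S M$, the initial value $V_0$ lies in $OM$, so $V_t$ solves exactly the SDE \eqref{tildeX} defining $\hat U_t(S_u, u \cdot S_u^{-1})$. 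Strong uniqueness, with infinite lifetime guaranteed by the compactness of $M$, forces $V_t$ and $\hat U_t(S_u, u\cdot S_u^{-1})$ to be indistinguishable, and the identity $\pi \circ R_{S^{-1}} = \pi$ then yields $X_t(u) = \pi(V_t) = \hat X_t(S_u, u \cdot S_u^{-1})$.

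For part (b), given $\Sigma$, $f$ and $S$ with $S^2 = f^{-1} \Sigma f$, I would choose the specific lift $u := f \cdot S \in FM_x$ and verify directly from the defining relations, using orthonormality of $f$ and symmetry of $S$, that $\Sigma_u = f S^2 f^{-1} = \Sigma$ and $S_u = S$. Then $u \cdot S_u^{-1} = f$, and because $X_t$ is right-invariant under $\Ort(d)$ the notation $X_t(\Sigma) = X_t(u)$ is consistent, so applying part (a) to this $u$ gives $X_t(\Sigma) = \hat X_t(S, f)$. The main---and essentially only---nontrivial step is the equivariance identity for $H_a$; once this is in place, working in Stratonovich form is crucial so that the change of coordinates by $R_{S^{-1}}$ introduces no Itô correction term.
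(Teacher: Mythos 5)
Your proof is correct and follows essentially the same route as the paper: part (a) is exactly the paper's observation that $U_t(u)\cdot S_u^{-1}$ solves \eqref{tildeX}, which you fill in via $\GL(d)$-equivariance of $H_a$ and Stratonovich covariance. For (b) you directly define $u = f\cdot S$ and verify $\Sigma_u=\Sigma$, $S_u=S$, whereas the paper starts from an arbitrary $\tilde u$ with $\Sigma_{\tilde u}=\Sigma$ and rotates by $q\in\Ort(d)$; both arrive at the same frame $u=f\cdot S$, so the arguments are equivalent, with yours being the slightly more direct presentation.
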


\begin{proof}
The result in (a) follows by observing that $U_t(u) \cdot S_u^{-1}$ solves the equation in \eqref{tildeX}. For (b), let $f \in OM$ be chosen. Let $\tilde u$ be any frame with $\Sigma_{\tilde u} = \Sigma$ and define $\tilde f = \tilde u \cdot S_{\tilde u}^{-1} = f \cdot q$ for some $q \in \Ort(d)$. If $u = \tilde u \cdot q^{-1}$, then
$$\Sigma_{\tilde u} = \Sigma_{u}, \qquad S_{u} = q S_{\tilde u} q^{-1},$$
and so $u \cdot S_u^{-1} = \tilde u \cdot S_{\tilde u}^{-1} \cdot q^{-1} = f$. We furthermore have that $S_u^2 = u^{-1} \Sigma_u u = S_u^{-1} f^{-1} \Sigma_u f S_u$, so $S_u^2 = f^{-1} \Sigma_u f$. Since $X_t(u)$ and $X_t(u \cdot q)$ are indistinguishable, the result follows.
\end{proof}

\begin{definition}[\cite{sommer_anisotropic_2015}]
Let $(M,g)$ be a Riemannian manifold, with $x \in M$ and $\Sigma \in \Sym^+ T_x M$ arbitrary. We consider the normal distribution $\mathcal{N}(x, \Sigma)$ on $M$ as the distribution of $X_1(\Sigma)$. %The density of $X_t(\Sigma)$ with respect to the Riemannian volume measure is $p_t(\, \cdot \, ;x,\Sigma)$, and we write $p( \, \cdot \, ; x, \Sigma) = p_1(\, \cdot \, ; x, \Sigma)$ for the density of the normal distribution.
\end{definition}

\begin{remark}
\begin{enumerate}[\rm (a)]
\item As a consequence of Proposition~\ref{prop:YX}, no generality is lost by the choice of $t=1$ in the definition of $\mathcal{N}(x, \Sigma)$ since $X_t(\Sigma)=X_1(t\Sigma)$.
\item If $f$ is chosen to be an eigenframe of $\Sigma$ in Proposition~\ref{prop:YX}~(b), then $S = \Lambda = \diag\{ \lambda_1, \dots, \lambda_d\}$ is a diagonal matrix.
\end{enumerate}
\end{remark}

\begin{remark} \label{re:Lifetime}
If $M$ is non-compact, the processes $X_t(\Sigma)$ and $\hat X_t(S,f)$ may only be defined up to an exploding time, but this does not change anything about our above conclusions.
\end{remark}

\begin{remark}[Summary and comparison with the Euclidean case] \label{re:RnComp}
To compare with the Euclidean setting and summarize the section: We are interested in having an intrinsic way of computing the mean and variance on a curved space where vector space structure is not available. The approach is to consider the normal distribution $\calN(0, \Sigma)$ in $\mathbb{R}^d$ as the density of the stochastic process $\Sigma^{1/2} B_t$ at time $t=1$ where $B_t$ is a standard Brownian motion. If $u_1, \dots, u_d$ is a choice of orthonormal basis of $T_x M$ where $x$ is a point on the manifold $M$, we can use this basis to consider $\Sigma^{1/2}B_t$ as a process in $T_xM$ with $\Sigma$ now an endomorphism of $T_xM$. We can then use the construction of the orthonormal frame bundle to ``steal'' the property of having a canonical basis. This allow us to define a process $X_t$ on the manifold by finding the process whose differential equals that of $\Sigma^{1/2} B_t$ in the basis $H_1, \dots, H_d$ and projecting it back to the manifold. Stopping this again at time $t=1$ gives us an intrinsic way of transferring the normal distribution $\calN(0, \Sigma)$ to a curved space.

Alternatively, if $\sigma_1, \dots, \sigma_d$ denotes the columns of $\Sigma^{1/2}$, we can write $\Sigma^{1/2} B_t$ as a standard Brownian motion in this non-orthonormal basis. We can now copy the process above, but now using the general frame bundle $FM$ to equivalently obtain the density $\calN(x,\Sigma)$ on the manifold.

The construction of the map $u \mapsto S_u$ is something that is necessary only in the non-flat case, as the result of parallel transport of $\Sigma_u$ from $x$ to a different point $y$ will depend on path in general, while $S_{u(t)}$ will remain constant along any parallel path. Also, for the case of $\mathbb{R}^d$, the curvature $R$ vanishes, meaning that $\calH$ is Frobenius integrable, i.e. $[\calH, \calH] \subseteq \calH$. This means that there is a foliation $\mathcal{F}$ of $FM$ where each leaf is tangent to $\calH$, and since $TFM = \calH \oplus \ker \pi_*$, each leaf is diffeomorphic to $M$.
\end{remark}

\section{Sub-Riemannian distance and most probable paths} \label{sec:distance}
\subsection{Sub-Riemannian structure on symmetric endomorphisms}
We now define the sub-Riemannian distance $d_\rho$ that was used in Section~\ref{sec:data_anisotropy} and that enters in the Onsager-Machlup functional for the most probable paths. By a sub-Riemannian manifold, we mean a manifold $N$ with a smoothly varying inner product $\rho = \langle \cdot , \cdot \rangle_{\rho}$ defined only on a subbundle $E\subseteq TN$. The subbundle $E$ can be thought of as the ``permissible directions'' on the manifold as only curves that are tangent to $E$ have a well-defined length. The distance $d_\rho(x,y)$ between two points are then found by taking the infimum over all the lengths of the curves connecting $x$ and $y$ that are also tangent to $E$. Further details of sub-Riemannian structures are outlined in Appendix~\ref{sec:SR}. In our discussion below, the manifold $N$ will be either $FM$, $OM$ or $\Sym^+ TM$ with the permissible directions being those that are the result of parallel transport with respect to the Levi-Civita connection.

We consider the fiber bundle $\pi: \Sym^+ TM \to M$ as defined in Section~\ref{sec:stochastics}. Just as in \eqref{H+V}, we have a decomposition $T\Sym^+ TM = E \oplus \ker \pi_*$, where $E$ is the derivatives of all curves $\Sigma(t)$ that are parallel along their projection $\pi(\Sigma(t)) = \gamma(t)$. We then define a sub-Riemannian metric $\rho$ on $E$ by
$$\left\| \dot \Sigma(t) \right\|_\rho^2 = \left\langle \dot \Sigma(t), \dot \Sigma(t) \right\rangle_\rho = \left\langle \Sigma(t)^{-1} \dot \gamma(t), \dot \gamma(t) \right\rangle_g, $$
where $\Sigma(t)$ is any curve tangent to $E$. Denote the corresponding sub-Riemannian distance by $d_\rho$.

Let $\Sigma \in \Sym^+ T_x M$ be a fixed element for $x \in M$. For any curve $\gamma:[0,T] \to M$ with $\gamma(0) = x$, introduce notation $\ptr_t : T_x M \to T_{\gamma(t)} M$ for the parallel transport along the curve. We define $L^{\Sigma}(\gamma) = L^\rho(\Sigma(\cdot ))$ as the length of the curve
\begin{equation} \label{Sigmaptr} \Sigma(t) =\Sigma_{\ptr_t} = \ptr_t \Sigma \ptr_t^{-1}, \end{equation}
with respect to $\rho$. Then for any $y \in M$,
$$d_\rho(\pi^{-1}(y), (x, \Sigma)) = \inf_{\begin{subarray}{c} \gamma(0) = x, \\ \gamma(T) = y \end{subarray}} L^\Sigma(\gamma).$$
This equation defines the map $d_\rho$ as used in Section~\ref{sec:background}. Note that the choice of $T$ in the interval $[0,T]$ does not affect the distance, as we can reparametrize a curve to be defined on any given interval.

\subsection{Alternative description} \label{sec:Alternative}
The sub-Riemannian length $L^\rho(t)(\Sigma(\cdot))$ can also be realized in the following two alternative ways. Let $\Sigma(t)$ be a curve in $\Sym^+ TM$ that is parallel along its projection $\gamma(t)$ with $\Sigma(0) = \Sigma^x \in \Sym^+ T_x M$. Let $u^x \in FM$ be any frame with $\Sigma_{u^x} = \Sigma^x$ and define $u(t)$ by parallel transport along $\gamma(t)$. It then follows that $\Sigma_{u(t)} = \Sigma(t)$ for all $t$. It also follows that $u(t)$ is tangent to the bundle $\calH$ in Section~\ref{sec:GeometryFrame}. We can then define a sub-Riemannian metric $\tilde \rho$ on $\calH$ by
\begin{align*}
& \| \dot \Sigma(t) \|_{\rho}^2 = \langle \Sigma(t)^{-1} \dot \gamma(t), \dot \gamma(t) \rangle_g \\
& = \langle u(t)^{-1} \dot \gamma(t), u(t)^{-1} \dot \gamma(t) \rangle = \langle \theta(\dot u(t)) , \theta(\dot u(t)) \rangle =: \| \dot u(t)\|^2_{\tilde \rho} ,
\end{align*}
with the property $L^\rho(\Sigma(\cdot)) = L^{\tilde \rho}(u(\cdot))$. Observe that $(FM, \calH, \tilde \rho)$ has a global orthonormal basis $H_1, \dots, H_d$, in contrast to $\Sym^+TM$.

We can also consider the problem on the orthonormal frame bundle $OM$. Let $f^x \in OM_x$ be any initial frame and define $S^2 = (f^x)^{-1} \Sigma^x f^x$. If we define $f(t)$ by parallel transport, then $S^2 = f(t)^{-1} \Sigma(t) f(t)$ for any $t$. We introduce a corresponding sub-Riemannian metric $\rho_S$ on $\calH$, now considered as the bundle of derivatives of parallel orthogonal frames. We define it by
\begin{align*}
& \| \dot \Sigma(t) \|_{\rho}^2 = \langle S^{-2} f(t)^{-1} \dot \gamma(t), f^{-1} \dot \gamma(t) \rangle_g =: \| \dot f(t)\|^2_{\rho_S}.
\end{align*}
In other words, $(OM, \calH, \rho_S)$ has global orthonormal basis $H_{Se_1}, \dots, H_{Se_d}$. By the above equality, the lengths $L^\rho(\Sigma(\cdot))$ and $L^{\rho_S}(f(\cdot))$ coincide.

In what follows, we will often state our results using the formulation on $\Sym^+TM$, as this does not require any choice of initial frame. However, we will usually present our proofs on $OM$, as this reduces the problem to a space of minimal dimension and provides access to a global basis for the horizontal subbundle.

\subsection{Path probability and the Onsager-Machlup functional}
Before investigating most probable paths on $FM$ or $\Sym^+ TM$, we review the construction of the Onsager-Machlup functional and its relation to path probability and most probable path.
Let $B_t$ be a Euclidean Brownian motion. The Onsager-Machlup functional measures the probability that realizations of $B_t$ sojourns around smooth paths in the sense of staying in $\ve>0$ diameter cylinders. More precisely, for paths $b\in C^1([0,T],\mathbb R^d)$, define
\begin{equation}
  \mu_\ve(b)
  =
  P(\|B_t-b(t)\|<\ve\ \forall t\in[0,T])
  .
\end{equation}
It can now be shown \cite{fujita_onsager-machlup_1982} that $\log \mu_\ve(b)$ tends to $c_1+c_2/\ve^2+\int_0^T L(b(t),\dot{b}(t))dt$ for constants $c_1,c_2$ as $\ve\to 0$ and with $L(b(t),\dot{b}(t))=-\frac12\|\dot{b}(t)\|^2$. The function $L$ is denoted the Onsager-Machlup functional. One here recognizes the usual Euclidean energy of~$b$ in the integral over $L$. Paths between two points maximizing $\int_0^T L(b(t),\dot{b}(t)) \, dt$ are termed most probable. 

If instead $b\in C^1([0,T],M)$ is a curve on $M$, the Onsager-Machlup functional changes to $L(b(t),\dot{b}(t))=-\frac12\|\dot{b}(t)\|_g^2+\frac1{12}S(b(t))$ where $S$ is the scalar curvature of $M$. Most probable paths on manifolds thus minimize a functional that in addition to the path energy includes the integral of the scalar curvature along the path. In case the scalar curvature is constant over $M$, most probable paths and geodesics thus coincide.

\subsection{Path probability with development}
We will now apply the Onsager-Machlup theory, however for paths on $FM$ and $\Sym^+ TM$.
Consider the stochastic process $X_t(\Sigma)$ defined in Section~\ref{sec:stochastics}. The process is generated by a Euclidean Brownian motion $B_t$ through the SDE in \eqref{eq:development}.
We look at paths $b(t)$ such that the corresponding development \eqref{eq:development_det} of $b$ starts at $u\in FM$ with $\Sigma_u=\Sigma$ and ends with the projection $\gamma(t)=\pi(u(t))$ to $M$ satisfying $\gamma(T)=y$. We then apply the Onsager-Machlup functional on the anti-development $b(t)$ which is a path in Euclidean space and hence $L(b(t),\dot{b}(t))=-\frac12\|\dot{b}(t)\|^2$. Then $\gamma(t)$ is termed most probable for $Y_t(\Sigma)$ given by \eqref{eq:development} if it realizes
\begin{equation}
 \max_{\begin{subarray}{c} \Sigma_{u(0)}=\Sigma \\ \gamma(T)=y
 \end{subarray}}
  -\log \mu_\ve(b)
  =
  \min_{\begin{subarray}{c} \Sigma_{u(0)}=\Sigma, \\ \gamma(T)=y \end{subarray}}
  \int_0^T\frac12\|\dot{b}(t)\|^2 dt
  =
  d_\rho(\pi^{-1}(y),(y,\Sigma))^2
\end{equation}
for $y\in M$.
Extremal paths for $L^\rho(\Sigma(\cdot))$ thus have a probabilistic characterization as being \emph{most probable} with respect to $X_t(\Sigma)$.

\begin{remark}[Euclidean comparison]
Notice that we are here looking for the most probable path $b(t)$ in $\mathbb{R}^d$ given that the endpoint of the developed curve $\gamma(t)$ is $y$. If $M = \mathbb{R}^d$, then $\gamma(t) = x + b(t)$ and minimizers of the above problem are always geodesics, no matter the choice of $\Sigma$. This illustrate the fact that on $\mathbb{R}^d$, $d_\rho((x, \Sigma), (y, \tilde \Sigma))$ is only finite when $\Sigma$ equals $\tilde \Sigma$ when written in the usual coordinates and that the distance then is $d_\rho((x,\Sigma), (y, \Sigma)) = \| \Sigma^{-1/2}(x-y)\|$. This will not be the case for a general Riemannian manifold $(M,g)$.
\end{remark}

\begin{remark}[Comparison to the manifold Onsager-Machlup functional]
    When $\Sigma$ is orthonormal, $L(b(t),\dot{b}(t))=-\|\dot{\gamma}(t)\|^2$. We thus see that the application of the Onsager-Machlup functional on the anti-development $b(t)$ of $\gamma(t)$ deviates from the standard manifold construction that includes the scalar curvature term $\frac1{12}S(\gamma(t))$. 
\end{remark}

\subsection{Covariance scaling and optimal estimators}
%We observe that $L^\rho(\Sigma(\cdot))$ is the length of the curve $\Sigma(t)$ with respect to this sub-RIemannian metric, 
Consider now a set of samples $y_1, \dots, y_n$, and let us return to finding the small-t limit of the most fitting density as described in \eqref{eq:sample_est}. If we write $\Sigma = C \Sigma'$ with $\det \Sigma' =1$, then writing $d_{\rho}(\pi^{-1}(y_i) , (x,\Sigma)) = d_{\rho}(\pi^{-1}(y_i) , \Sigma)$ for the sake of more compact formulas, then
\begin{align*}
& \frac1{2n} \sum_{i=1}^n \left( d_{\rho}(\pi^{-1}(y_i) , \Sigma)^2+\ln\det_g \Sigma \right) = \frac{d}{2} \ln C + \frac{1}{2nC} \sum_{i=1}^n  d_{\rho}(\pi^{-1}(y_i) , \Sigma')^2 
\end{align*} 
Here we have used that if $\Sigma(t) \in \Sym^+ T_{\gamma(t)} M$ is parallel along its projection $\gamma(t)$, then $C \Sigma(t)$ is still parallel along $\gamma(t)$. Hence, for any positive constant $C >0$, so we have $L^\rho(C \Sigma(\cdot)) = C^{-1/2} L^\rho(\Sigma(t))$. Finally $\pi^{-1}(y)$ is invariant under multiplication of positive scalars, meaning that $d_{\rho}(\pi^{-1}(y_i) , C\Sigma') = C^{-1/2} d_{\rho}(\pi^{-1}(y_i) , \Sigma')$.

Using this expressing, we see that the optimal choice for $C$ is
\begin{equation} \label{C} C = \frac{1}{nd} \sum_{i=1}^n d(\pi^{-1}(y_i), \Sigma')^2. \end{equation}
Furthermore,
\begin{align*}
\Sigma' & = \argmin_{\begin{subarray}{c} \tilde \Sigma \in \Sym^+ TM \\ \det \tilde \Sigma = 1  \end{subarray}} \left. \left(  \ln C + \frac{1}{nC} \sum_{i=1}^n  d_{\rho}(\pi^{-1}(y_i) , \tilde \Sigma)^2 \right) \right|_{C= \frac{1}{n} \sum_{i=1}^n d(\pi^{-1}(y_i), \tilde \Sigma)^2 }\\
& = \argmin_{\begin{subarray}{c} \tilde \Sigma \in \Sym^+ TM \\ \det \tilde \Sigma = 1  \end{subarray}}   \left(  \ln \left( \sum_{i=1}^n  d_{\rho}(\pi^{-1}(y_i) , \tilde \Sigma)^2 \right)  -  \ln n + 1  \right) ,
\end{align*} 
which implies that
\begin{equation} \label{SigmaPrime}
\Sigma' =  \argmin_{\begin{subarray}{c} \tilde \Sigma \in \Sym^+ TM \\ \det \tilde \Sigma = 1  \end{subarray}}  \sum_{i=1}^n  d_{\rho}(\pi^{-1}(y_i) , \tilde \Sigma)^2 .\end{equation}

The result is the following reduced optimization problem for sample estimators. The separate optimization for $\Sigma'$ and total variance $C$ can ease numerical optimization, see Section~\ref{sec:algorithms}.
\begin{proposition}
  The sample estimate \eqref{eq:sample_est} can be found by solving \eqref{SigmaPrime} to obtain optimal $\Sigma'\in \Sym^+ TM$, $\det \tilde \Sigma = 1$ followed by setting $\Sigma=C\Sigma'$ with $C$ given by~\eqref{C}.
\end{proposition}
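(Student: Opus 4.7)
The plan is to carry out the reparametrization $\Sigma=C\Sigma'$ with $C>0$ and $\det\Sigma'=1$, reduce the joint minimization over $\Sym^+TM$ to a one-dimensional optimization over $C$ with $\Sigma'$ held fixed, and then read off the minimizing $C$ and the residual problem for $\Sigma'$.

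First I would verify the scaling laws that the paragraph before the proposition uses. Directly from the definition $\|\dot\Sigma(t)\|_\rho^2=\langle \Sigma(t)^{-1}\dot\gamma(t),\dot\gamma(t)\rangle_g$, replacing $\Sigma$ by $C\Sigma$ multiplies the integrand by $C^{-1}$, so lengths scale by $C^{-1/2}$. Since $C\Sigma(t)$ remains parallel along $\gamma(t)$ whenever $\Sigma(t)$ is, and since $\pi^{-1}(y)$ is invariant under positive scaling in the fiber, this gives the identity $d_\rho(\pi^{-1}(y_i),C\Sigma')^2=C^{-1}d_\rho(\pi^{-1}(y_i),\Sigma')^2$. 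Combined with $\ln\det_g(C\Sigma')=d\ln C+\ln\det_g\Sigma'=d\ln C$, substituting into the objective of \eqref{eq:sample_est} yields
\begin{equation*}
  \frac{1}{2n}\sum_{i=1}^n\Bigl(d_\rho(\pi^{-1}(y_i),C\Sigma')^2+\ln\det_g(C\Sigma')\Bigr)
  =\frac{d}{2}\ln C+\frac{1}{2nC}\sum_{i=1}^n d_\rho(\pi^{-1}(y_i),\Sigma')^2.
\end{equation*}

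Next I would treat $C$ as a one-dimensional free variable for fixed $\Sigma'$. Setting the derivative with respect to $C$ to zero gives the formula
\begin{equation*}
  C=\frac{1}{nd}\sum_{i=1}^n d_\rho(\pi^{-1}(y_i),\Sigma')^2,
\end{equation*}
which is \eqref{C}; a second-derivative check (or the fact that $\tfrac{d}{2}\ln C+A/C$ is strictly convex on $(0,\infty)$ for $A>0$) confirms this is the unique minimizer. Substituting back produces, up to additive constants independent of $\Sigma'$, the term $\tfrac{d}{2}\ln\bigl(\sum_i d_\rho(\pi^{-1}(y_i),\Sigma')^2\bigr)$. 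Since $\ln$ is strictly increasing on $(0,\infty)$, minimizing in $\Sigma'$ subject to $\det\Sigma'=1$ is equivalent to the problem \eqref{SigmaPrime}.

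Finally I would combine the two steps: any minimizer $(x,\Sigma)$ of \eqref{eq:sample_est} can be written uniquely as $\Sigma=C\Sigma'$ with $\det\Sigma'=1$, the profiled-out optimum in $C$ is given by \eqref{C}, and the remaining problem for $\Sigma'$ (which also determines the basepoint $x$ through its footpoint in $\Sym^+TM$) is exactly \eqref{SigmaPrime}, proving the proposition. The only mild subtlety — and I think the main thing worth checking carefully — is the invariance of the fiber $\pi^{-1}(y_i)$ under positive scalar multiplication and the fact that parallelness is preserved by this scaling, but both are immediate from the bundle structure of $\Sym^+TM\to M$ and the linearity of the Levi-Civita connection; no genuine obstacle arises.
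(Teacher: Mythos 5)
Your proposal is correct and follows essentially the same route as the paper: the decomposition $\Sigma=C\Sigma'$ with $\det\Sigma'=1$, the scaling identity $d_\rho(\pi^{-1}(y_i),C\Sigma')^2=C^{-1}d_\rho(\pi^{-1}(y_i),\Sigma')^2$ coming from parallelism and fiber invariance, profiling out $C$ to get \eqref{C}, and using monotonicity of $\ln$ to reduce the remaining problem to \eqref{SigmaPrime}. The convexity check for the one-dimensional problem in $C$ is a harmless addition the paper leaves implicit.
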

%In summary, to find our optimal choice of covariance $\Sigma = C \Sigma'$, where $C$ and $\Sigma'$ are given by respectively \eqref{C} and \eqref{SigmaPrime}.

\section{Dynamics} \label{sec:dynamics}
\subsection{Equations for most probable paths}
We can now state and prove the main theorem of the paper that characterizes the dynamics of most probable paths. In the follow subsections, we describe the most important direct consequences of the theorem.
Recall that the covariant derivative along the curve $\gamma$ is given by $D_t = \nabla_{\dot \gamma(t)}  = \ptr_t \frac{d}{dt} \ptr_t^{-1} $. 

\begin{theorem} \label{th:ProbPath}
Assume that $\gamma:[0,T] \to M$ is the most probable curve from $x$ to $y = \gamma(T)$ with respect to the covariance $\Sigma \in \Sym^+ T_x M$. If $\Sigma_{\ptr_t}$ is as in \eqref{Sigmaptr}, then $\gamma$ (or a reparametrization of $\gamma$) solves the equation
\begin{equation} \left\{ \qquad
  \begin{split}
D_t \dot \gamma(t) & =  \Sigma_{\ptr_t}  R\left(  \chi(t) \right) \dot \gamma(t), \\
D_t \chi(t) & = \dot \gamma(t) \wedge \Sigma_{\ptr_t}^{-1} \dot \gamma(t), \, \,
\chi(T) = 0 \in \wedge^2 T_xM
  \end{split} \right.
  \label{eq:mpp}
\end{equation}
\end{theorem}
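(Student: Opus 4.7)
The plan is to recast the problem on the orthonormal frame bundle. By Section~\ref{sec:Alternative}, the Euclidean length of the driving path $b(t)$ equals the sub-Riemannian length of its horizontal lift $f(t)\in OM$ with respect to the global orthonormal frame $H_{Se_1},\dots,H_{Se_d}$ of $(OM,\calH,\rho_S)$. A most probable $\gamma$ is therefore the projection of a sub-Riemannian minimizer $f$ starting at some $f^x\in OM_x$ and reaching the fiber $\pi^{-1}(y)\subset OM$. Applying the Pontryagin maximum principle to the control system $\dot f=H_{Sw}|_f$ with cost $\tfrac12|w|^2$ produces, in the normal case, a costate $\lambda(t)\in T^*_{f(t)}OM$ with optimal control $w_i(t)=\langle\lambda(t),H_{Se_i}|_{f(t)}\rangle$ and Hamiltonian $H^*=\tfrac12\sum_i\langle\lambda,H_{Se_i}\rangle^2$.

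Next I split $\lambda(t)$ along the decomposition $T^*OM=\calH^*\oplus\calV^*$ into a horizontal piece $p(t)\in T^*_{\gamma(t)}M$ and a vertical functional $\mu(t)\colon\so(d)\to\R$, $\mu(A)=\langle\lambda,\xi_A\rangle$. The horizontal velocity then reads $\dot\gamma(t)=f(t)(Sw)=\Sigma_{\ptr_t}p^\sharp$, since $\Sigma_{\ptr_t}=f(t)S^2 f(t)^{-1}$ and $f(t)$ is parallel along $\gamma$. Hamilton's equations for the fiber-linear observables are then computed with the rule $\{\phi_X,\phi_Y\}=-\phi_{[X,Y]}$ fed with the bracket relations \eqref{Hxibrackets}: $[H_{Se_i},H_{Se_j}]=-\xi_{\underline{R}(Se_i,Se_j)}$ produces the curvature force driving $\dot w$, while $[\xi_A,H_{Se_i}]=H_{ASe_i}$ yields a source for $\dot\mu$ that is bilinear in $w$ and linear in $S$. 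The final step is to push these equations back down to $M$: I identify $\mu(t)\in\so(d)^*$ with a bivector $\chi(t)\in\wedge^2 T_{\gamma(t)}M$ via the parallel frame $f(t)$ and the isomorphism $\so(d)^*\cong\wedge^2\R^d$, so that $\dot\mu$ corresponds to $D_t\chi$, the curvature source becomes $\Sigma_{\ptr_t}R(\chi(t))\dot\gamma$, and the quadratic source becomes $\dot\gamma\wedge\Sigma_{\ptr_t}^{-1}\dot\gamma$. The boundary condition $\chi(T)=0$ is the PMP transversality condition: since the terminal constraint $\pi(f(T))=y$ fixes only the projection of $f(T)$, the covector $\lambda(T)$ must annihilate $\calV_{f(T)}=\ker\pi_*$, forcing $\mu(T)=0$.

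The main obstacle will be bookkeeping through the chain of identifications $T^*OM\to T^*M$, $\so(d)^*\to\wedge^2\R^d\to\wedge^2 T_\gamma M$, and $S^2\leftrightarrow\Sigma$, together with the signs carried by the Poisson brackets, so that $\Sigma_{\ptr_t}$ appears as the outer factor in the curvature term whereas $\Sigma_{\ptr_t}^{-1}$ appears inside the wedge. A secondary point is that PMP characterizes only normal extremals, so to get the statement as advertised one has to separately argue that abnormal minimizers do not intervene; this should follow from the bracket-generating property of $\calH$, which the identities \eqref{Hxibrackets} supply whenever the curvature is sufficiently nondegenerate, since the vertical directions are then reached by a single horizontal bracket.
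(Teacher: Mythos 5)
Your overall route is the Hamiltonian/PMP one: lift to $(OM,\calH,\rho_S)$, write the sub-Riemannian Hamiltonian, exploit the bracket relations~\eqref{Hxibrackets} as Poisson-bracket relations, and read off $\chi(T)=0$ from transversality at the fiber $\pi^{-1}(y)$. This matches the computation the paper performs in Appendix~\ref{sec:NormalGeo} to establish the normal-geodesic corollary, and would indeed reproduce the system~\eqref{eq:mpp}. The paper's proof of Theorem~\ref{th:ProbPath} itself is instead a direct Lagrangian variation of the energy on the constrained Hilbert manifold $\mathcal{C}(f^x,\pi^{-1}(y))$, with the term $A(t)-A(T)$ emerging from an integration by parts; both approaches are legitimate and lead to the same equations.

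The genuine gap is your final paragraph on abnormal extremals. Your proposed fix — invoking bracket generation of $\calH$, ``whenever the curvature is sufficiently nondegenerate'' — does not work on two counts. First, bracket generation of a sub-Riemannian distribution does not exclude abnormal minimizers; there are step-two bracket-generating structures with genuinely abnormal minimizers, so ``vertical directions reached by one bracket'' is not the right criterion. Second, $\calH$ is not even bracket-generating on $OM$ in general: by~\eqref{Hxibrackets} and Ambrose–Singer, $[\calH,\calH]$ only reaches the vertical directions spanned by the holonomy algebra, so for a flat or reducible $M$ the condition fails outright, yet the theorem is stated (and true) without any curvature hypothesis. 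The argument that actually closes this gap — and the reason the paper never needs to worry about abnormals — is that the endpoint constraint is to the \emph{fiber} $\pi^{-1}(y)$, not to a point of $OM$. The endpoint map $\Pi:\mathcal{C}(f^x)\to M$, $f(\cdot)\mapsto\pi(f(T))$ has differential $\Pi_*W=\sum_i w_i(T)f_i(T)$, which is surjective at \emph{every} horizontal curve; hence every $y\in M$ is a regular value, $\mathcal{C}(f^x,\pi^{-1}(y))$ is a Hilbert manifold everywhere, and any minimizer is automatically a regular (normal) critical point of the energy. You should replace the bracket-generating appeal with this submersion argument, which is elementary and curvature-free.
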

Note the explicit role of the Riemannian curvature tensor $R$ in the dynamics for~$\gamma$. The infinitesimal covariance $\Sigma$ affects both the $\gamma$ and $\chi$ evolutions.

\begin{proof}
Recall the notation on the frame bundle introduced in Section~\ref{sec:GeometryFrame}, and in particular the equations \eqref{CartanEq}. Let $\Sigma \in \Sym^+ T_x M$ and $y \in M$ be fixed. Using the realization in Section~\ref{sec:Alternative}, we consider the problem on $OM$. Let $f^x \in OM_x$ be any orthonormal frame and define $S^2 = (f^x)^{-1} \Sigma f^x$. We then want to find a curve $f(t)$ defined on $[0,T]$ such that $f(t)$ is parallel along its projection $\gamma(t)$ in $M$, $f(0) = f^x$ and that $f(t)$ is an extremal with respect to the length
$$L^\Sigma(\gamma) = \int_0^T \| S^{-1} f(t)^{-1} \dot \gamma(t) \|\, dt,$$
among all such curves with $\gamma(T) = y$.

 Consider the Hilbert manifold $\mathcal{C}(f^x)$ of absolutely continuous $\calH$-horizontal curves $f:[0,T] \to OM$ with $f(0)= f^x$  and with $\dot f(t)$ in $L^2$, see Appendix~\ref{sec:SR} for details. If $s \mapsto f_s(\cdot)$ is a smooth curve in $\mathcal{C}(f^x)$ then since
$$f_s(0) = f^x, \qquad \text{ and } \qquad \omega(\dot f_s(t))= 0,$$
we know that $\partial_s f_s(0) =0$ and,
$$\partial_t \omega(\partial_s f_s) = d\omega(\dot f_s, \partial_s f_s) = \Omega\left(\dot f_s, \partial_s f_s\right) = \underline{R}(f_s)(\theta(\dot f_s), \theta(\partial_s f_s)).$$
Any tangent vector of $\mathcal{C}(f^x)$ at $t \mapsto f(t)$ can be considered as a vector field $W(t)$ along $f(t)$, such that
\begin{equation} \label{Vvf} W(t) = \left. \left(H_{w(t)} + \xi_{\int_0^t \underline{R}(f(\tau))(\theta(\dot f(\tau)), w(\tau)) \, d\tau} \right) \right|_{f(t)} , \qquad w(0) = 0,\end{equation}
for some $w \in L^2([0,T],\mathbb{R}^d)$ with $w(0) = 0$.

Consider now the endpoint map $\Pi: \mathcal{C}(f^x) \to M$ given by $f(\cdot) \mapsto \pi(f(T))$. We then see that the differential at $f(\cdot )$ for a vector field $W$ as in~\eqref{Vvf} is
$$\Pi_* W = \sum_{i=1}^d w_i(T) f_i(T).$$
It follows that every point $y \in M$ is a regular value of $\Pi$, and so the preimage $\mathcal{C}(f^x, \pi^{-1}(y)) = \Pi^{-1}(y)$ of curves from $f^x$ to $\pi^{-1}(y)$ is a Hilbert manifold from the inverse function theorem. Tangent vectors are then vector fields $W$ as in~\eqref{Vvf} with the extra restriction that $w(T) =0$.

To complete our computation for the first order condition for optimality, we will introduce some notation. We identify elements in $\wedge^2 \mathbb{R}^d$ with elements in $\so(d)$ such that for $a, b, c \in \mathbb{R}^d$, the two-vector $a \wedge b$ is identified with the map in $\so(d)$ given by
$$(a \wedge b)c = \langle a, c\rangle b - \langle b, c \rangle a.$$
Conversely, a matrix $A = (A_{ij})$ is identified with the two-vector
$$\sum_{i<j} A_{ij} e_j \wedge e_i = \frac{1}{2} \sum_{i,j=1}^d A_{ij} e_j \wedge e_i.$$
Define an inner product on $\so(d)$ by
\begin{equation} \label{soIP} \langle A, B \rangle = - \frac{1}{2}\tr AB, \qquad A, B \in \so(d).\end{equation}
We note the relation
$$\langle A, a \wedge b \rangle =\langle Aa, b\rangle.$$
Furthermore, if $f$ is any orthonormal frame, then we will have
$$\langle \underline{R}(f)(a,b) , A \rangle = \langle \underline{R}(f)(A), a \wedge b \rangle.$$

We look for critical points of the energy functional
$$E(f(\cdot)) = \frac{1}{2}  \int_0^T \left|S^{-1}  \theta(\dot f(t))\right|^2 \, dt,$$
on the manifold $\mathcal{C}(f^\mu, \pi^{-1}(y))$.
Let $s \mapsto f_s$ be any smooth curve in $\mathcal{C}(f^\mu, \pi^{-1}(y))$ with $f_0(t) = f(t)$ and $\partial_s f |_{s=0} (t)= W(t)$. Write
$$\theta(W(t)) = w(t), \qquad \theta(\dot f(t)) = v(t),$$
and recall that then
$$w(0) = w(T) = 0, \qquad \omega(W(t)) = \int_0^t \underline{R}(f(\tau))(v(\tau), w(\tau)) d\tau.$$
If $f(t)$ is a local minimum of the energy, then
\begin{align*}
& 0 = \partial_s E(f_s)|_{s=0} =   \int_0^T \langle S^{-2} \theta(\dot f_s(t)) ,  \partial_s \theta(\dot f_s(t)) \rangle \, dt|_{s=0} \\
& =  \int_0^T \left\langle S^{-2}  \theta(\dot f_s(t)) ,   \partial_t \theta(\partial_s f_s(t)) \right\rangle \, dt|_{s=0} +  \int_0^T \langle S^{-2}  \theta(\dot f_s(t)) ,  d\theta(\partial_s f_s, \dot f_s(t)) \, dt|_{s=0} \\
& =  \int_0^T \left\langle S^{-2}  v(t) ,   \dot w(t) \right\rangle \, dt  -  \int_0^T \langle S^{-2}  v(t) ,  \omega(W(t)) v(t) \rangle \, dt \\
& =  - \int_0^T \left\langle S^{-2}  \dot v(t) , w(t) \right\rangle \, dt  - \int_0^T \left\langle v(t) \wedge S^{-2}  v(t) ,  \int_0^t \underline{R}(f(\tau))(v(\tau), w(\tau)) d\tau  \right\rangle \, dt.
\end{align*}
Defining $A(t) = \int_0^t v(\tau) \wedge S^{-2} v(\tau) \, d\tau$ as a curve in $\so(d)$, and using integration by parts
\begin{align*}
0 & =  - \int_0^T \left\langle S^{-2}  \dot v(t) ,   w(t) \right\rangle \, dt  -   \left\langle A(T) ,  \int_0^T \underline{R}(f(t))(v(t), w(t)) dt \right\rangle \, dt \\
&  \qquad + \int_0^T \left\langle A(t) ,  \underline{R}(f(t))(v(t), w(t)) \right\rangle dt \\
& =  \int_0^T \left\langle - S^{-2}  \dot v(t) + \underline{R}(f(t))(A(t)- A(T)) v(t) ,  w(t) \right\rangle\, dt .
\end{align*}
It follows that
\begin{align*}
\dot v(t) = S^{2} \underline{R}(f(t)) (A(t) - A(T)) v(t) .
\end{align*}
The result follows by defining $\chi(t) = \frac{1}{2}\sum_{i,j=1}^d (A_{ij}(t) - A_{ij}(T)) f_j(t) \wedge f_i(t)$.
\end{proof}

\begin{remark} \label{re:Parametrization}
The solutions in Theorem~\ref{th:ProbPath} are the ones parametrized such that $| \Sigma_{\ptr_t}^{-1/2} \dot \gamma(t) | =c $ is a first integral. We can see this directly
 from
$$
\frac{1}{2} \frac{d}{dt} | \Sigma_{\ptr_t}^{-1/2} \dot \gamma(t) |^2 = \langle \Sigma_{\ptr_t}^{-1} D_t \dot \gamma(t) , \dot \gamma(r) \rangle  = -  \left\langle R\left( \chi(t) \right) \dot \gamma(t), \dot \gamma(t) \right\rangle =0. $$
It follows that $L^\Sigma(\gamma) = cT$.
 \end{remark}

\subsection{Consequences of Theorem~\ref{th:ProbPath}}
We look at some immediate consequences of our previous result. We will first make a statement about normal geodesics, see Appendix~\ref{sec:SR} for definition.

  \begin{corollary}
    Consider $\Sym^+ TM$ with the sub-Riemannian metric $\rho$. Solutions of \eqref{eq:mpp} are normal geodesics of $\rho$. Conversely, normal geodesics of $\rho$ are the solutions of \eqref{eq:mpp} with the condition $\chi(T) = 0$ omitted.
\end{corollary}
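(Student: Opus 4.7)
The strategy is to rerun the variational analysis in the proof of Theorem~\ref{th:ProbPath}, but this time on the Hilbert manifold $\mathcal{C}(f^x, f^y)$ of horizontal curves with both endpoints fully fixed, and then match the outcome to the Hamiltonian characterization of normal sub-Riemannian geodesics. I will work on $OM$ using the formulation of Section~\ref{sec:Alternative}; the statement on $\Sym^+ TM$ follows via the identification $i_S$.

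For the forward direction, the argument in the proof of Theorem~\ref{th:ProbPath} already shows that any solution of \eqref{eq:mpp} with $\chi(T) = 0$ is a critical point of the energy $E$ on $\mathcal{C}(f^x, \pi^{-1}(y))$. Since the smaller set $\mathcal{C}(f^x, f^y)$ (with $f^y$ equal to the terminal value of the solution) is contained in $\mathcal{C}(f^x, \pi^{-1}(y))$, the solution is \emph{a fortiori} a critical point on $\mathcal{C}(f^x, f^y)$. A Pontryagin-type lift then produces a covector curve satisfying the Hamilton equations for $H(f,\lambda) = \tfrac{1}{2}\sum_i \lambda(H_{Se_i})^2$ on $T^*OM$, and the projection to $OM$ (and thence to $\Sym^+ TM$) is the desired normal geodesic.

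For the converse, I would repeat the variational derivation from the proof of Theorem~\ref{th:ProbPath}, but on $\mathcal{C}(f^x, f^y)$. Admissible variations $W(t) = H_{w(t)} + \xi_{\omega(W(t))}$ must now satisfy both $w(T) = 0$ and the additional $\so(d)$-valued integral constraint $\omega(W(T)) = \int_0^T \underline{R}(f(\tau))(v(\tau), w(\tau))\, d\tau = 0$. Relax the second constraint with a Lagrange multiplier $\mu \in \so(d)$ and rerun the integration by parts of that proof; the first-order optimality condition becomes
\[
S^{-2}\dot v(t) \;=\; \underline{R}(f(t))\bigl( A(t) - A(T) - \mu\bigr)\, v(t).
\]
Setting $\chi(t) = \tfrac{1}{2}\sum (A_{ij}(t) - A_{ij}(T) - \mu_{ij})\, f_j(t) \wedge f_i(t)$ then recovers both equations of \eqref{eq:mpp}, but now with $\chi(T)$ an arbitrary element of $\wedge^2 T_y M$ (parametrised by $-\mu$). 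Sweeping $\mu$ over $\so(d)$ and $f^y$ over $OM$ produces every normal geodesic.

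The step I expect to be the main obstacle is keeping the Lagrange-multiplier bookkeeping and tensor identifications straight -- in particular, identifying $\mu \in \so(d)^*$ with an element of $\so(d)$ via the inner product \eqref{soIP}, and matching it through parallel transport with the final value of $\chi$. A secondary verification is that the Pontryagin lift just sketched genuinely satisfies the Hamilton equations on $T^*OM$ corresponding to the sub-Riemannian Hamiltonian; this amounts to expanding those equations using the bracket relations \eqref{Hxibrackets} and the structure equations \eqref{CartanEq}, which is routine but lengthy bookkeeping.
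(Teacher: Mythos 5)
Your plan is viable in outline, but as written it has a genuine gap at the point you dismiss as ``routine but lengthy bookkeeping.'' In the forward direction you argue: a solution of \eqref{eq:mpp} is critical for $E$ on $\mathcal{C}(f^x,\pi^{-1}(y))$, hence a fortiori critical on $\mathcal{C}(f^x,f^y)$, and ``a Pontryagin-type lift then produces a covector curve satisfying the Hamilton equations.'' That inference is not valid in sub-Riemannian geometry: a critical point of the energy with fixed endpoints is a normal geodesic only if one can actually produce the multiplier, which requires either regularity of the $OM$-valued endpoint map at the curve or an explicit construction of the covector. For the structure $(OM,\calH,\rho_S)$ the endpoint map into $OM$ can be singular along horizontal curves (abnormal curves exist; e.g.\ when the curvature degenerates, and in the flat case $\calH$ is not even bracket-generating), so ``critical $\Rightarrow$ normal'' fails as a general implication -- this is exactly the normal/abnormal dichotomy recalled in Appendix~\ref{sec:SR}. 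The same unproved equivalence (``$\gamma$ is a normal geodesic iff $dE_\gamma=\lambda\circ d\hat\Pi_\gamma$ for some covector $\lambda$ at the endpoint'') is also what your converse rests on when you claim that sweeping $\mu$ over $\so(d)$ ``produces every normal geodesic'': your multiplier computation shows that critical points of the augmented functional satisfy \eqref{eq:mpp} with $\chi(T)=-\mu$, but to cover \emph{all} normal geodesics you must know that every normal geodesic satisfies that multiplier identity. This characterization is a standard fact (it can be cited from \cite{Mon02}), but in your write-up it is the crux, not a verification step, and without it neither direction is closed.

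For comparison, the paper proves the corollary without any variational or multiplier argument: in Appendix~\ref{sec:NormalGeo} it writes the sub-Riemannian Hamiltonian $P=\tfrac12\sum_j P_{Se_j}^2$ on $T^*OM$, computes the Poisson brackets of $P_a$ and $Q_A$ from \eqref{Hxibrackets}, and derives directly that normal geodesics satisfy $\dot v=S^2\underline{R}(A)v$, $\dot A=v\wedge S^{-2}v$, $\dot f=H_v$ -- precisely the equations obtained in the proof of Theorem~\ref{th:ProbPath} with the condition $A(T)=0$ dropped. This constructs the covector lift explicitly (so the forward direction needs no regularity of the endpoint map) and gives both inclusions at once. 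If you want to keep your Lagrangian route, you should (i) state and use the multiplier characterization of normal extremals as a citable lemma rather than as bookkeeping, and (ii) derive the forward direction from that equivalence, not from mere criticality on $\mathcal{C}(f^x,f^y)$; with those repairs your computation with the multiplier $\mu\in\so(d)$, identified with $-\chi(T)$ via \eqref{soIP} and parallel transport, does reproduce the paper's conclusion.
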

\begin{proof}
This results follow from a similar computation as in the proof of Theorem~\ref{th:ProbPath}, but from a Hamiltonian rather than a Lagrangian perspective. We leave the details in Appedix~\ref{sec:NormalGeo}.
\end{proof}
Most probable paths were previously studied as normal geodesics without the 
end-point condition $\chi(T) = 0$ and with the assumption of normality \cite{sommer_evolution_2015}. The corollary makes this assumption unnecessary and strengthens the characterization with the end-point condition.
One can consider the condition $\chi(T) =0$ as ensuring that our endpoint is the optimal point in $\pi^{-1}(y)$. For a simple analogue, one may consider the distance from a point to a line in $\mathbb{R}^2$, where the optimal path is a geodesic with the final condition that it must hit the line orthogonally.

\subsection{Representation in a parallel frame} \label{sec:RepParallel}
  We can write the system \eqref{eq:mpp} in a parallel frame as follows. This concrete form can be used for numerical integration of the system.

  Let $f \in OM_x$ be an arbitrary initial frame and define $f(t)$ by parallel transport. Write $f(t)^{-1} \Sigma f(t) = S^2$. Let $(S^{ij})$ be the inverse of $S$. Write
\begin{equation}
  \begin{split}
&v(t) = (v_1(t), \dots, v_d(t))^\dagger = f(t)^{-1} \dot \gamma(t)\\ &R_{ijkl}(t) = \langle f_l(t), R(f_i(t), f_j(t)) f_k(t) \rangle_g.
  \end{split}
\end{equation}
Finally, consider a matrix $(\chi_{ij}(t))$ in $\so(d)$ such that $\chi(t) = \frac{1}{2} \sum_{i,j=1}^d \chi_{ij}(t) f_j(t) \wedge f_i(t)$. The above equations take the form
\begin{equation} \left\{ \qquad
  \begin{split}
&\dot v_r(t)  =  \frac{1}{2} \sum_{i,j,k,l,s=1}^d S_{rs} S_{sl} R_{jikl}(t) \chi_{ij}(t) v_k(t), \\
&\dot \chi_{ij}(t) = \sum_{k=1}^d (v_j(t) S^{ik} - v_i(t) S^{jk}) S^{kl} v_l(t), \\
&\chi_{ij}(T) = 0.
  \end{split}
  \label{eq:mpp_coords} \right.
\end{equation}
The dependence on the parallel frame $f(t)$ is in the coefficients $R_{ijkl}(t)$, unless $\nabla R=0$, in which case these coefficients are constant. If we choose $f$ as eigenframe, then $S = \Lambda = \diag\{ \lambda_1, \dots, \lambda_d\}$ is a diagonal matrix, and the equations in \eqref{eq:mpp_coords} reduce to
\begin{equation} \dot v_l(t) =  \frac{\lambda_l^2}{2} \sum_{i,j,k=1}^d  R_{jikl}(t) \chi_{ij}(t) v_k(t), \quad
\dot \chi_{ij}(t)  = \frac{\lambda_j^2 - \lambda_i^2}{\lambda_i^2 \lambda_j^2} v_i(t) v_j(t).
\label{eq:mpp_coords_Eig}
\end{equation}

\section{Most probable paths on surfaces} \label{sec:surfaces}
\subsection{Equations for most probable paths}
We now explore the particular equations for the case when $d=2$. For $y \in M$, let $\kappa(y)$ be the Gaussian curvature of $M$ at $y \in M$.
\begin{corollary}
For a given $\Sigma \in \Sym^+ T_x M$, $x \in M$, let $\lambda_1 \geq \lambda_2 > 0$ be its eigenvalues. For a curve $\gamma: [0,T] \to M$ starting at $x$, define $f_1(t)$, $f_2(t)$ as a parallel eigenframe of $\Sigma_{\ptr_t}$ along $\gamma(t)$ such that $f_j(t)$ corresponds to $\lambda_j$.
Then $\gamma$ is the solution of
$$\dot \gamma(t) = c\lambda_1 \cos \theta(t) f_1(t) + c \lambda_2 \sin \theta(t) f_2(t),$$
with
\begin{equation}
\label{2dSolution} \dot \theta(t) = c^2 \kappa(\gamma(t)) h(t), \qquad \dot h(t) = -  \frac{\lambda_1^2 - \lambda_2^2}{2} \sin 2\theta(t), \qquad h(T) = 0.
\end{equation}
\end{corollary}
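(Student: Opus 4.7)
The plan is to specialize the eigenframe system \eqref{eq:mpp_coords_Eig} to $d=2$ and then rewrite it in polar variables on the $v$-component. No new ideas beyond Theorem~\ref{th:ProbPath} are needed; the work is essentially bookkeeping, aided by the fact that both the curvature tensor and $\so(2)$ collapse to a single scalar in dimension two.

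First I would use that on a surface the Riemann tensor has the form $R(X,Y)Z = \kappa(\langle Y,Z\rangle X - \langle X,Z\rangle Y)$, so in the orthonormal parallel eigenframe $f_1(t),f_2(t)$ one gets $R_{ijkl}(t) = \kappa(\gamma(t))(\delta_{jk}\delta_{il}-\delta_{ik}\delta_{jl})$. Similarly, $\so(2)$ is one-dimensional, so the matrix $(\chi_{ij})$ is determined by the single entry $\chi_{12}(t) = -\chi_{21}(t)$. Plugging these into \eqref{eq:mpp_coords_Eig} and simplifying the sums collapses the $v$-equation to
\begin{equation*}
\dot v_1(t) = -\lambda_1^2\,\kappa(\gamma(t))\,\chi_{12}(t)\,v_2(t), \qquad \dot v_2(t) = \lambda_2^2\,\kappa(\gamma(t))\,\chi_{12}(t)\,v_1(t),
\end{equation*}
while the $\chi$-equation becomes $\dot\chi_{12}(t) = \tfrac{\lambda_2^2-\lambda_1^2}{\lambda_1^2\lambda_2^2}\,v_1(t)v_2(t)$.

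Next I would introduce the polar parametrization $v_1(t) = c\lambda_1\cos\theta(t)$, $v_2(t) = c\lambda_2\sin\theta(t)$. This ansatz is consistent with Remark~\ref{re:Parametrization}: indeed $|\Sigma_{\ptr_t}^{-1/2}\dot\gamma(t)|^2 = |\Lambda^{-1}v(t)|^2 = c^2$, so the first integral is automatic and the polar form loses no generality up to reparametrization. Differentiating $v_1 = c\lambda_1\cos\theta$ and equating with the first displayed equation yields $\dot\theta(t) = \lambda_1\lambda_2\,\kappa(\gamma(t))\,\chi_{12}(t)$, and the same identity drops out of the $v_2$-equation (a consistency check).

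Finally, I would define $h(t) := \tfrac{\lambda_1\lambda_2}{c^2}\chi_{12}(t)$, which converts the $\dot\theta$-identity into $\dot\theta(t) = c^2\kappa(\gamma(t))\,h(t)$. Differentiating $h$ and substituting the $\chi_{12}$-equation together with $v_1 v_2 = \tfrac{c^2\lambda_1\lambda_2}{2}\sin 2\theta$ gives
\begin{equation*}
\dot h(t) = \frac{\lambda_1\lambda_2}{c^2}\cdot\frac{\lambda_2^2-\lambda_1^2}{\lambda_1^2\lambda_2^2}\cdot\frac{c^2\lambda_1\lambda_2}{2}\sin 2\theta(t) = -\frac{\lambda_1^2-\lambda_2^2}{2}\sin 2\theta(t),
\end{equation*}
and the endpoint condition $\chi(T)=0$ from Theorem~\ref{th:ProbPath} forces $h(T)=0$. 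There is no substantial obstacle; the only place to be careful is keeping signs and the $c$-factor straight when passing from $\chi_{12}$ to $h$, and verifying that the rescaling absorbs the $\lambda$-dependence exactly as stated.
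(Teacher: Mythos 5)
Your proposal is correct and follows the same route as the paper: both specialize \eqref{eq:mpp_coords_Eig} to $d=2$, exploit that the surface curvature tensor has a single component $\kappa(\gamma(t))$, write $v$ in the polar form dictated by the first integral of Remark~\ref{re:Parametrization}, and recover $h$ as the same rescaling $h = \tfrac{\lambda_1\lambda_2}{c^2}\chi_{12}$. The computation is the same; you merely display the intermediate scalar ODEs for $\dot v_1,\dot v_2,\dot\chi_{12}$ more explicitly before collapsing to \eqref{2dSolution}.
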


\begin{proof}
If we write $\gamma(t) = v_1(t) f_1(t) + v_2(t) f_2(t)$, then $c =\| \Sigma^{-1}_{\ptr_t} \dot \gamma\|_g = \sqrt{\lambda_1^{-2} v_1(t)^2 + \lambda_2^{-2} v_2(t)}$ is a first integral by Remark~\ref{re:Parametrization}. This allows us to write
$$\dot \gamma(t)  = v_1(t) f_1(t) + v_2 f_2(t) = c \lambda_1 \cos \theta(t) f_1(t) + c \lambda_2 \sin \theta(t) f_2(t).$$
Write $\chi(t) = \chi_{12}(t) f_2(t) \wedge f_1(t) = \frac{c^2}{\lambda_1 \lambda_2} h(t) f_2(t) \wedge f_1(t)$. We can now use equations~\eqref{eq:mpp_coords_Eig} for the computation and the fact that in dimension $2$ the only non-zero curvature terms are
$R_{1221}(t) = R_{2112}(t) = -R_{1212}(t) = - R_{2121}(t) = \kappa(\gamma(t))$.
Inserting the above expressions into \eqref{eq:mpp_coords_Eig} gives us
\begin{align*}
\dot v_1 & = -  c \lambda_1 \dot \theta \sin \theta = - \lambda_1^2 \cdot \kappa(\gamma) \cdot \frac{c^2}{\lambda_1 \lambda_2} h \cdot c \lambda_2 \sin \theta, \\
\dot v_2 & = c \lambda_2 \dot \theta \cos \theta = \lambda_2^2 \cdot \kappa(\gamma) \cdot \frac{c^2}{\lambda_1 \lambda_2} h \cdot c \lambda_1 \cos \theta, \\
\dot \chi_{12} & = - \frac{\lambda_2^2 - \lambda_1^2}{\lambda_1^2 \lambda_2^2} c^2 \lambda_1 \lambda_2 \cos \theta \sin \theta, 
\end{align*}
which simplify to \eqref{2dSolution}.
\end{proof}

\subsection{Example: Constant curvature surfaces} 
For any $- \frac{\pi}{2} \leq y \leq \frac{\pi}{2}$, $0 \leq k \leq 1$, we define the elliptic integral of first kind by
$$F(y, k) = \int_0^y \frac{ds}{\sqrt{1- k^2 \sin^2 s}},$$
with the complete version $K(k) = F(\frac{\pi}{2}, k)$. Correspondingly, we define the Jacobi elliptic sine function $\sn(y,k)$ by $\sn(F(y,k),k) = \sin y$. We define the corresponding delta amplitude as $\dn(y,k) = \sqrt{1- k^2 \sn(y,k)^2}$. 

The following result for constant curvature surfaces has the qualitative consequence of most probable paths bending towards the direction of highest eigenvalue $\lambda_1$ with positive curvature relative to a geodesic connecting the endpoint. For negative curvature, the curve bends towards the direction of lowest eigenvalue $\lambda_2$ with negative curvature. It was hypothesized in \cite{sommer_evolution_2015} that the behavior would be as in the positive curvature case. The result thus answer affirmative to this hypothesis, however only in the positive curvature case. See also Figure~\ref{fig:intro}.

\begin{theorem}
  Let $(M,g)$ be a two-dimensional manifold of constant Gaussian curvature $\kappa$. Let $\Sigma \in \Sym^+ T_x M$ be a chosen element with eigenvalues $\lambda_1 \geq \lambda_2 > 0$
  and with corresponding eigenvectors $f_1^x$, $f_2^x$.
  Let $\gamma$ be a most probable path with respect to $\Sigma$, normalized by $\| \Sigma_{\ptr_t}^{-1/2} \dot \gamma(t) \|_{g} =1$ and with $f_j(t) = \ptr_t f_j^x$. 
  If $\kappa>0$, the most probable paths will be of the form
\begin{align*}
\dot \gamma(t) & = \pm \lambda_1 \dn \left( K(k) + \alpha(T-t), k \right) f_1(t) + \lambda_2 k\sn \left( K(k) + \alpha (T-t), k \right) f_2(t) 
.
\end{align*}
  Conversely, if $\kappa<0$, most probable paths will take the form
\begin{align*}
\dot \gamma(t) & =   \lambda_1 k\sn \left( K(k) + \alpha (T-t), k \right) f_1(t)  \pm \lambda_2 \dn \left( K(k) + \alpha(T-t), k \right) f_2(t) .
\end{align*}
\end{theorem}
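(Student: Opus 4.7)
The plan is to reduce to the two-dimensional system of the previous corollary under constant curvature and the normalization $c = \| \Sigma_{\ptr_t}^{-1/2}\dot\gamma\|_g = 1$, and then directly recognize its solutions as pendulum-type motions expressible via Jacobi elliptic functions. Setting $c = 1$ in \eqref{2dSolution} gives the closed autonomous system
\begin{equation*}
\dot\theta(t) = \kappa\, h(t), \qquad \dot h(t) = -\tfrac{1}{2}(\lambda_1^2 - \lambda_2^2) \sin 2\theta(t), \qquad h(T) = 0.
\end{equation*}
Differentiating the first equation and using the second yields the pendulum equation $\ddot\theta = -\tfrac{\kappa(\lambda_1^2-\lambda_2^2)}{2}\sin 2\theta$, which has the first integral
\begin{equation*}
\frac{\kappa}{2}h(t)^2 + \frac{\lambda_1^2-\lambda_2^2}{2}\sin^2\theta(t) = C,
\end{equation*}
obtained by multiplying the $\theta$-equation by $\dot\theta/\kappa$ and integrating.

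For $\kappa > 0$, I would make the Jacobi elliptic ansatz
\begin{equation*}
\sin\theta(t) = k\,\sn\bigl(K(k) + \alpha(T-t),k\bigr), \qquad \cos\theta(t) = \pm\,\dn\bigl(K(k) + \alpha(T-t),k\bigr),
\end{equation*}
with $u(t) := K(k) + \alpha(T-t)$. The Pythagorean identity $\dn^2 + k^2 \sn^2 = 1$ makes the ansatz consistent. Differentiating $\sin\theta$ in $t$ and using $\sn' = \cn\,\dn$ gives $\dot\theta = \mp\alpha k\,\cn(u,k)$, and hence via $\dot\theta = \kappa h$, $h = \mp(\alpha k/\kappa)\cn(u,k)$. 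The endpoint condition $h(T) = 0$ is automatically satisfied because $u(T) = K(k)$ and $\cn(K(k),k) = 0$. Matching $\dot h$ with $-(\lambda_1^2-\lambda_2^2)\sin\theta\cos\theta$ via the identity $\cn' = -\sn\,\dn$ then forces $\alpha^2 = \kappa(\lambda_1^2-\lambda_2^2)$, which is positive as required. Substituting $v_1 = \lambda_1\cos\theta$, $v_2 = \lambda_2\sin\theta$ produces the stated formula for $\dot\gamma$.

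For $\kappa < 0$, the same strategy applies after swapping the roles of $\sin$ and $\cos$ in the ansatz, namely $\cos\theta = k\,\sn(u,k)$ and $\sin\theta = \pm\,\dn(u,k)$, which accounts for the fact that the first integral now forces $|\cos\theta|$ (rather than $|\sin\theta|$) to be bounded by~$k$. The same chain of identities yields $\alpha^2 = -\kappa(\lambda_1^2-\lambda_2^2) > 0$, the endpoint condition $h(T) = 0$ again arising from $\cn(K(k),k)=0$, and the stated expression for $\dot\gamma$ follows. In both cases, the parameters $(k,\alpha)$ (plus the overall sign) range over a two-parameter family, which by the standard existence–uniqueness theorem for the second-order ODE in $\theta$ exhausts all solutions compatible with $h(T)=0$ (the degenerate case $\lambda_1 = \lambda_2$ gives $k=0$ and is straightforward).

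The routine part is the Pythagoras/differentiation bookkeeping; the main obstacle is bookkeeping the signs and verifying that the proposed two-parameter family truly exhausts the space of critical paths in the relevant (librational) regime. Since the first integral shows every bounded orbit of the pendulum passing through $h=0$ can be shifted in time so that $u(T)=K(k)$, completeness of the parametrization follows, and the special shift $K(k)$ is precisely what encodes the free endpoint constraint $\chi(T)=h(T)=0$.
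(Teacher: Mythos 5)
Your proposal is correct and follows essentially the same route as the paper: setting $c=1$ reduces \eqref{2dSolution} to the pendulum equation $2\ddot\theta + \kappa(\lambda_1^2-\lambda_2^2)\sin 2\theta = 0$ with endpoint condition $\dot\theta(T)=0$, encoded by the shift $K(k)$, the only difference being that the paper quotes the classical pendulum solution $\theta(t)=\tfrac12\psi(t-T;\psi_0,\alpha)$ while you verify the $\sn/\dn$ ansatz directly and argue completeness via the first integral. One cosmetic slip: since $\alpha$ is forced to equal $\sqrt{|\kappa|(\lambda_1^2-\lambda_2^2)}$, your family is really one-parameter (in $k$, plus the sign/eigenframe orientation), not two-parameter, which is exactly the freedom left after imposing $\dot\theta(T)=0$.
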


\begin{proof}
For $\alpha  > 0$ and a given initial value $- \pi < \psi_0 < \pi$, consider $\psi(t) = \psi(t; \psi_0, \alpha)$ as the solution of the non-linear pendulum equation
$$\ddot \psi(t) + \alpha^2 \sin \psi(t) =0, \qquad \psi(0) = \psi_0, \qquad \dot\psi(0) = 0.$$
It is classical that the solution of this equation is
$$\psi(t) = 2 \sin^{-1}\left( k \, \sn \left( K(k) - \alpha t, k \right) \right), \qquad k = \sin \frac{\psi_0}{2},$$
with period $2 \tau = \frac{4 K(k)}{\alpha}$. We note that $\dot \psi(t) = 0$ if and only if $t = n \tau$ for some integer $n$.
 
Assume now that $(M,g)$ is a two-dimensional manifold of constant Gaussian curvature $\kappa$. We can then rewrite the equations \eqref{2dSolution} as
\begin{equation} \label{pendulum} 2 \ddot \theta + (\lambda^2_1 - \lambda_2^2) \kappa \sin 2 \theta = 0, \qquad \dot \theta(T) = 0.\end{equation}
First consider $\kappa = \frac{1}{r^2} > 0$. Define $\alpha^2 = (\lambda_1^2 - \lambda_2^2) \kappa$. Then $2\theta(t)$ is the solution of the non-linear pendulum equation. By possibly replacing $f_1(t)$ with $-f_1(t)$, we may assume that $- \frac{\pi}{2}  \leq \theta(0) \leq \frac{\pi}{2}$. If $\theta(0) = \pm \frac{\pi}{2}$, the only solution is a constant solution. For $-\frac{\pi}{2} < \theta(0) < \frac{\pi}{2}$, it follows that $\theta(t) \in (- \frac{\pi}{2}, \frac{\pi}{2} )$ for all time. As a consequence, we must have that for some $\psi_0 \in (-\pi,\pi)$,
$$\textstyle \theta(t) = \frac{1}{2} \psi(t-T; \psi_0, \alpha) .$$
Remark that $\theta(t)$ has period
$$2\tau = \frac{4K(k)}{\alpha} = \frac{4 K(\sin \frac{\psi_0}{2} )}{(\lambda_1^2 - \lambda_2^2) \kappa}.$$
In summary, most probable paths will be of the form
\begin{equation}
  \begin{split}
\dot \gamma(t) & = v_1(t) f_1(t) + v_2(t) f_2(t) = \pm \lambda_1 \cos (\theta(t)) f_1(t) + \lambda_2 \sin (\theta(t)) f_2(t) \\
& = \pm \lambda_1 \dn \left( K(k) + \alpha(T-t), k \right) f_1(t)  \\
& \qquad + \lambda_2 k\sn \left( K(k) + \alpha (T-t), k \right) f_2(t) .
  \end{split}
  \label{eq:kp}
\end{equation}

If $\kappa = - \frac{1}{r^2} < 0$, then $\pi-2\theta$ solves the pendulum equation with $\alpha = \sqrt{(\lambda_1^2- \lambda_2^2) |\kappa|}$. We will then have similar results, with the difference that we now have oscillations in the direction of $f_2(t)$ rather than $f_1(t)$. In other words,
\begin{equation}
  \begin{split}
\dot \gamma(t) & = v_1(t) f_1(t) + v_2(t) f_2(t)  \\
& =   \lambda_1 k\sn \left( K(k) + \alpha (T-t), k \right) f_1(t) \\
& \qquad   \pm \lambda_2 \dn \left( K(k) + \alpha(T-t), k \right) f_2(t) .
  \end{split}
  \label{eq:km}
\end{equation}
\end{proof}

We can solve \eqref{eq:kp} in $\SO(3)$ if $M$ is identified as a subset of the sphere with radius $r = \kappa^{-1/2}$, centered at the origin. Let $q(t) \in \SO(3)$ be the solution of
$$q(t)^{-1} \dot q(t)  = \frac{1}{r} \begin{pmatrix} 0 & 0 & v_1(t)  \\
0 &0 & v_2(t) \\
- v_1(t) & -v_2(t) & 0 \end{pmatrix}, \qquad q(0) = 1_3.$$
If we consider $f_1(0)$, $f_2(0)$ and the initial point $x$ as elements in $\mathbb{R}^3$, the solution is given by
$$\gamma(t) = \begin{pmatrix} f_1(0) & f_2(0) & \frac{1}{r} x \end{pmatrix} q(t) \begin{pmatrix} 0 \\ 0 \\ r \end{pmatrix}.$$
Examples are visualized in the Figures~\ref{fig:S2_1}, \ref{fig:S2_2}, \ref{fig:S2_3}
numerical integration in $\SO(3)$. These are completed in MATLAB using a modification of the DiffMan package \cite{DiffMan}.

Similarly, in the $\kappa<0$ case, we can consider $M$ as a subset of $\mathbb{R}^{2,1}$ with $\langle a, a \rangle = - r^2$ and solve \eqref{eq:kp} in $\SO(2,1)$ with such that
$$q(t)^{-1} \dot q(t)  = \frac{1}{r} \begin{pmatrix} 0 & 0 & v_1(t)  \\
0 &0 & v_2(t) \\
v_1(t) & v_2(t) & 0 \end{pmatrix}, \qquad \gamma(t) = \begin{pmatrix} f_1(0) & f_2(0) & \frac{1}{r} x \end{pmatrix} q(t) \begin{pmatrix} 0 \\ 0 \\ \frac{1}{r} \end{pmatrix}.$$

\begin{figure}[h]
    \centering
    \begin{subfigure}[b]{\textwidth}
        \centering
        \includegraphics[width=0.4\linewidth]{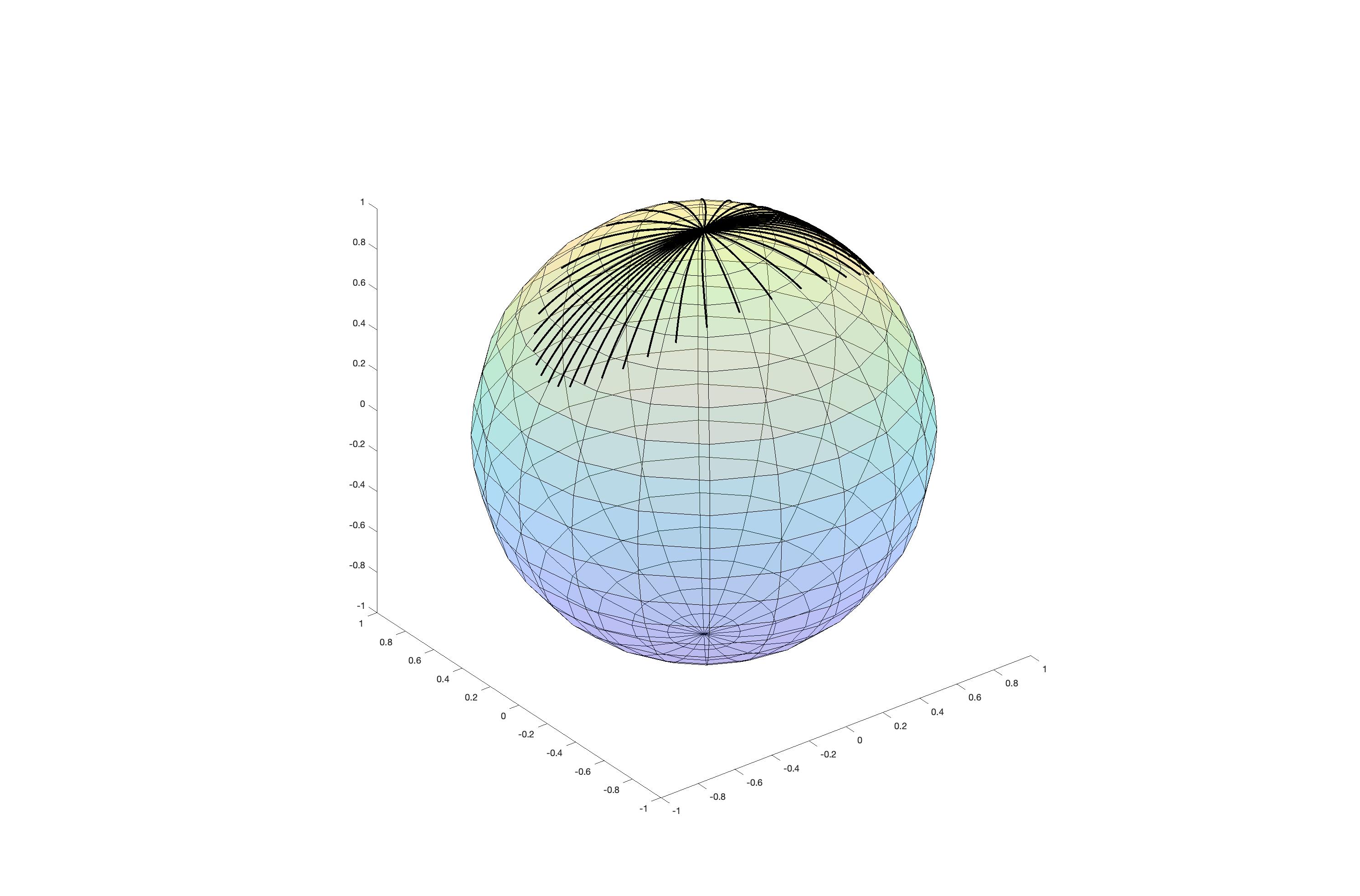} \hspace{-0.5cm}
        \includegraphics[width=0.35\linewidth]{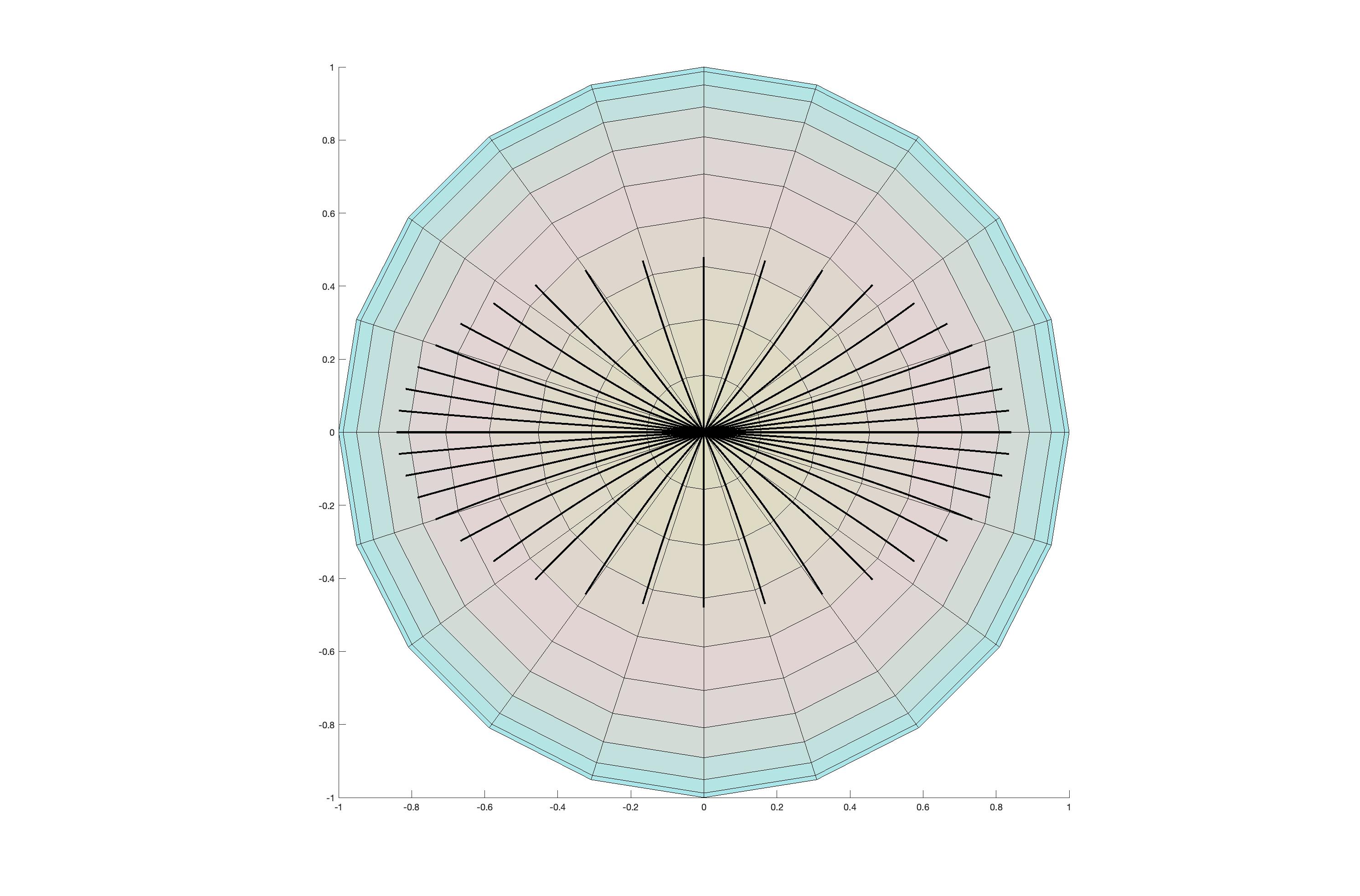} \hspace{-0.5cm}
    \end{subfigure}
    \caption{The figures above show examples of most probable paths on the unit sphere starting at the north pole with $\lambda_1 = 2$ and $\lambda_2 =1$. The two figures show examples of of most probable paths with $T =1/2$ seen from the side and above.}
    \label{fig:S2_1}
\end{figure}

\begin{figure}[h]
    \centering
    \begin{subfigure}[b]{\textwidth}
        \centering
        \includegraphics[width=0.45\linewidth]{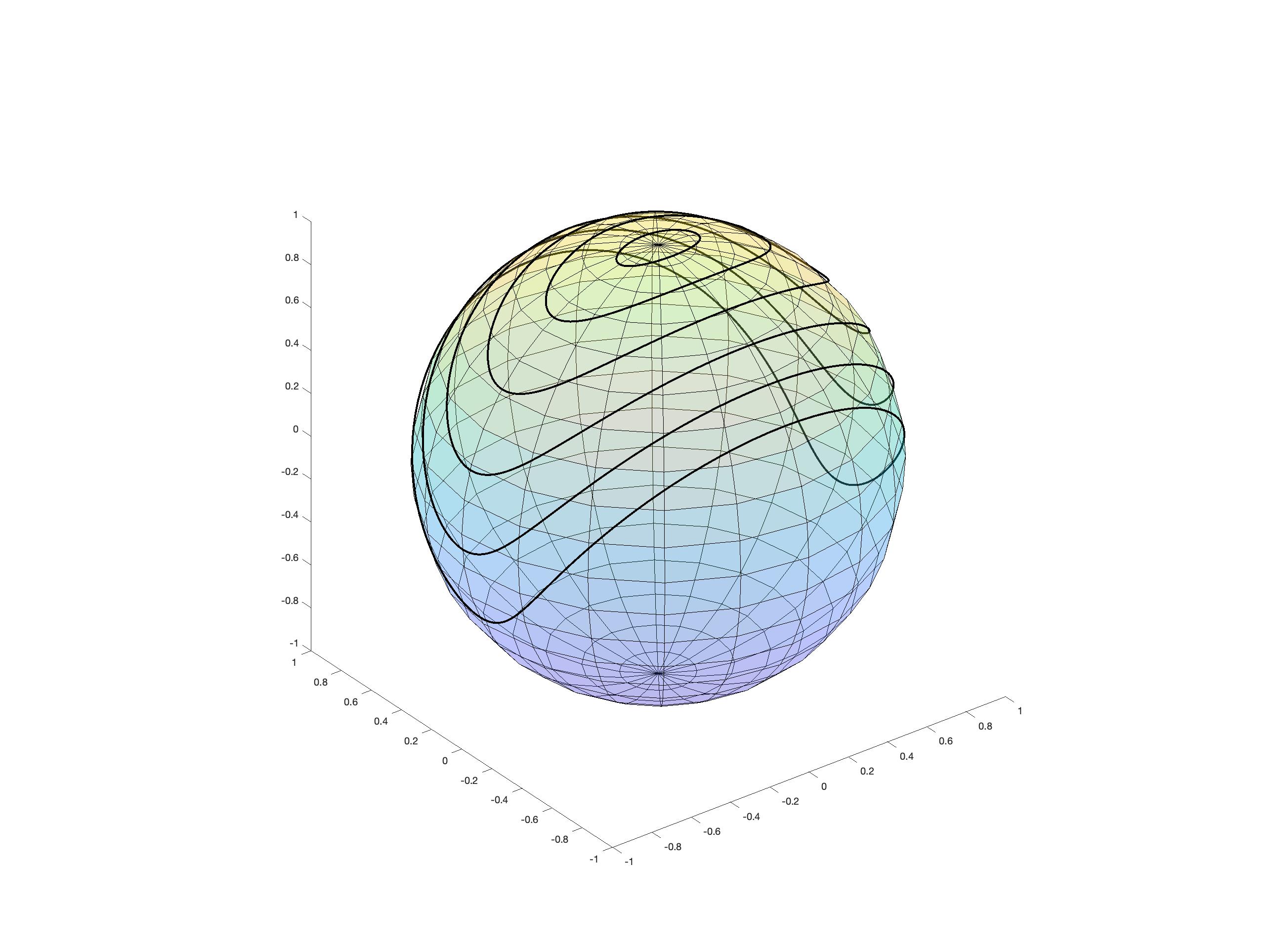} \hspace{-0.5cm}
        \includegraphics[width=0.35\linewidth]{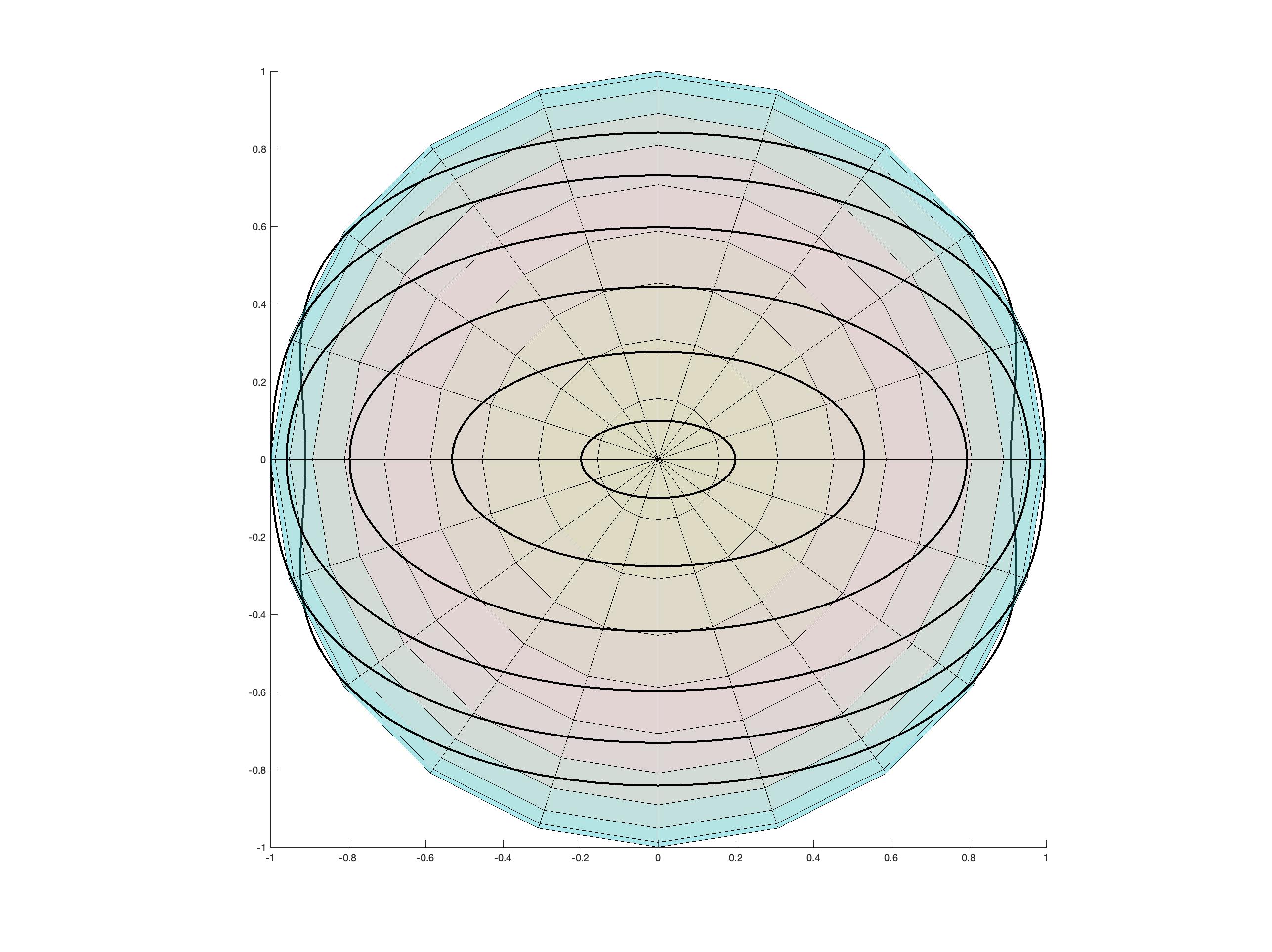}
    \end{subfigure}
    \caption{For the unit sphere with $\lambda_1 = 2$ and $\lambda_2 = 1$,  we have mapped the endpoints of the most probable paths for different values of $T$.}
    \label{fig:S2_2}
\end{figure}

\begin{figure}[h]
    \centering
    \begin{subfigure}[b]{\textwidth}
        \centering
        \includegraphics[width=0.4\linewidth]{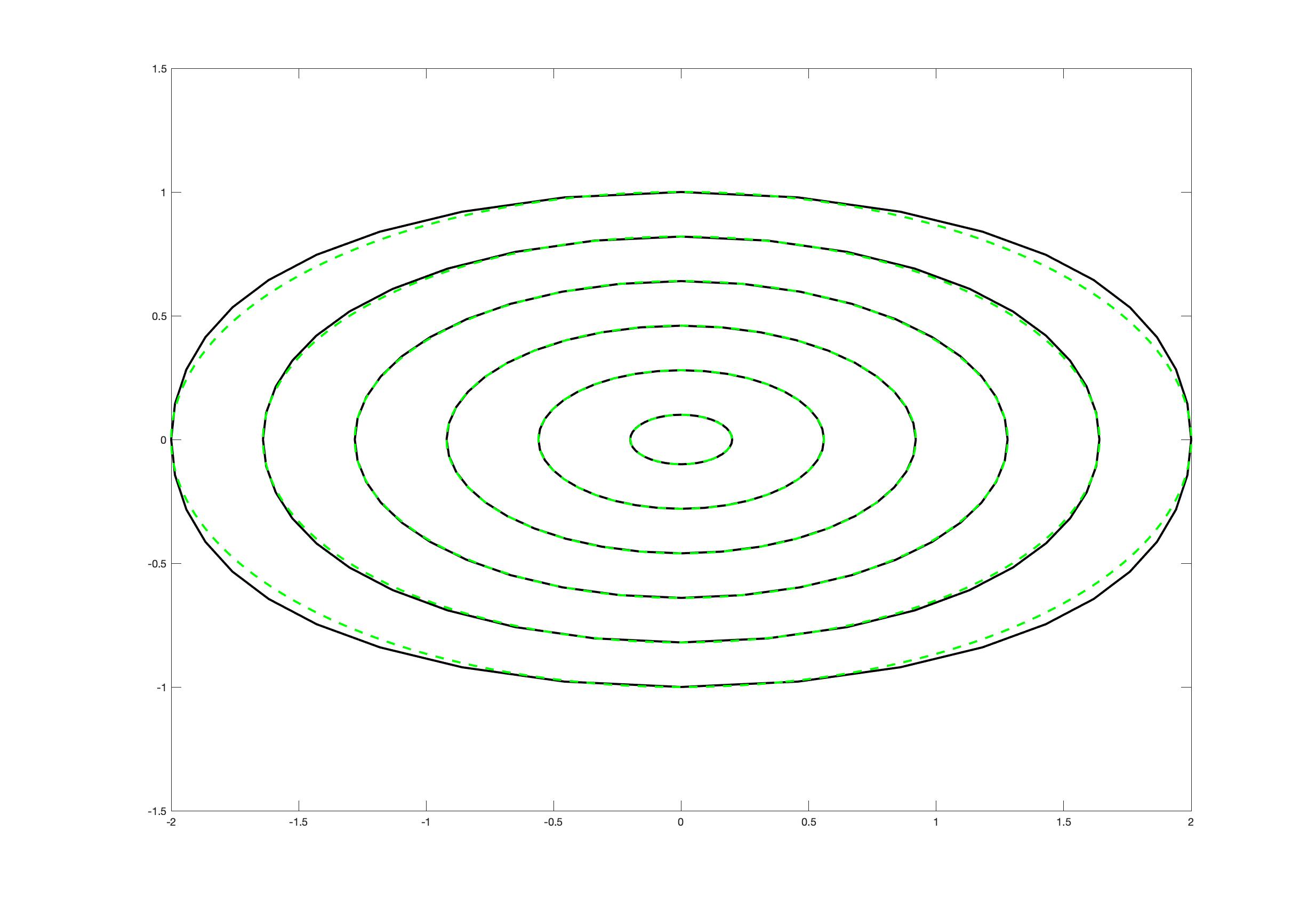}
        \includegraphics[width=0.45\linewidth]{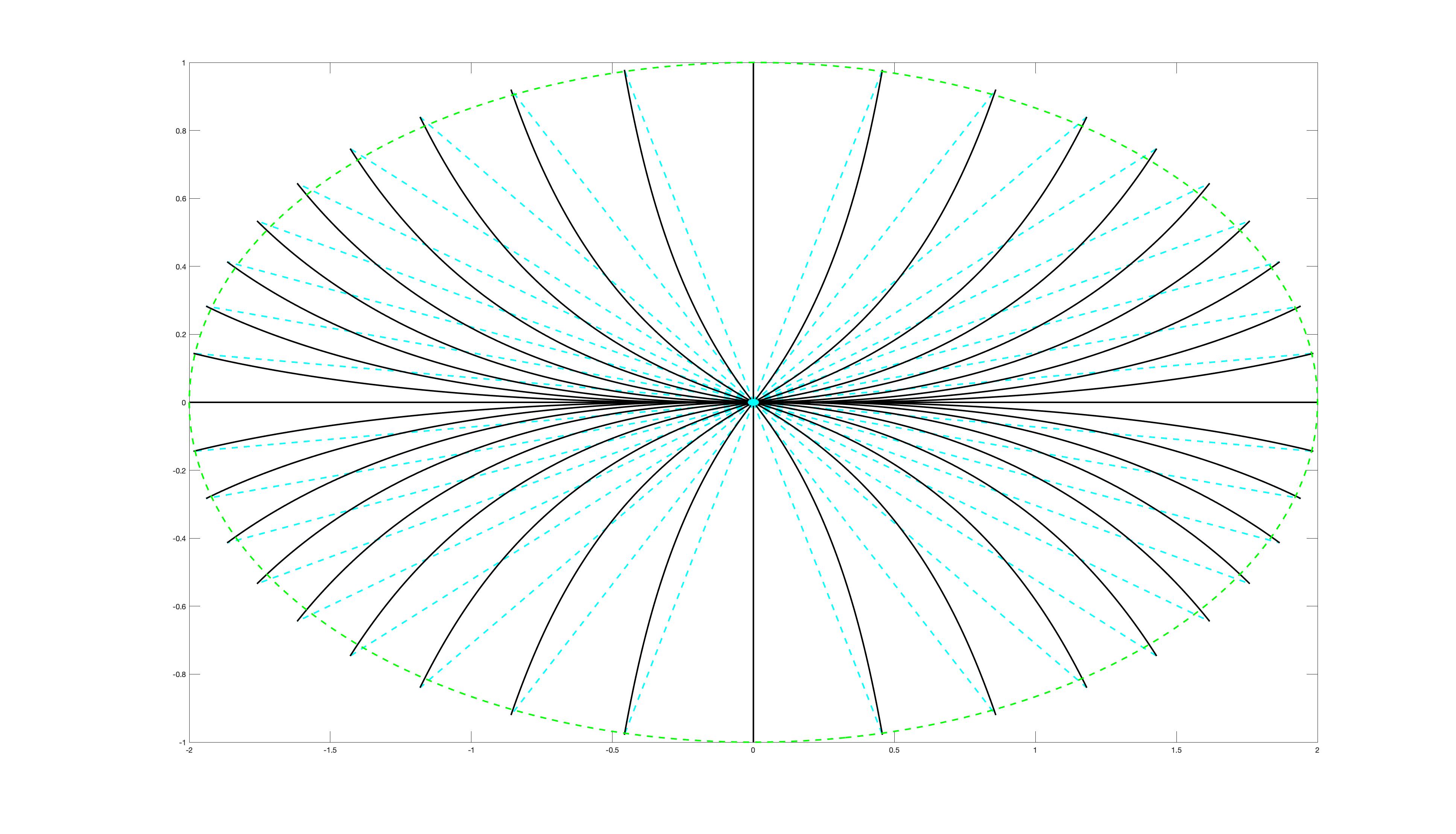} \hspace{-0.5cm}
    \end{subfigure}
    \caption{Again, we consider the unit sphere with $\lambda_1 = 2$ and $\lambda_2 = 1$, but now in a normal coordinate system centered at the north pole.
    The first image shows the endpoints compared to ellipses. The second shows most probable paths relative to geodesics.}
    \label{fig:S2_3}
\end{figure}

\section{Locally symmetric and symmetric spaces}\label{sec:examples}
%\todo[inline]{Remove? Put into a later paper?}
%\todo[inline]{Just add a bit better intro}

Let now $(M,g)$ be a locally symmetric space, i.e. a space were the curvature  satisfies $\nabla R =0$. We consider a given mean $x \in M$ and covariance $\Sigma \in \Sym^+ T_x M$. Let $f^x$ be an orthonormal eigenframe of $\Sigma$ with corresponding eigenvalues given by the diagonal matrix $\Lambda^2 = \diag\{ \lambda_1^2, \dots, \lambda_d^2\}$. Let $G \subseteq OM$ be the subset of frames that can be obtained by parallel transport of $f^x$. Then there is some subgroup $H \subseteq \Ort(d)$ of elements $q$ such that $f^x \cdot q \in G$ and we have a principal bundle structure
$$H \to G \stackrel{\pi}{\to} M.$$
We note that $\underline{R} = \underline{R}(f)$ is constant for every $f \in G$, and by the Ambrose-Singer theorem, the image of $\underline{R}$ spans the Lie algebra $\mathfrak{h} \subseteq \so(d)$ of $H$. Define a Lie algebra~$\mathfrak{g}$ as the vector space $\mathbb{R}^d \times \mathfrak{h} \subseteq \mathbb{R}^d \times \so(d)$ with Lie brackets
$$[(a, A), (b,B)]_{\mathfrak{g}} = (Ab - Ba, - \underline{R}(a,b) + [A, B]).$$
This is a well-defined Lie algebra since $[A, \underline{R}(a,b)]_{\mathfrak{g}} = \underline{R}(Aa, b) + \underline{R}(a, Ab)$ for any $a, b \in \mathbb{R}^d$ and $A \in \mathfrak{h}$. We will use this Lie group structure to give the following result.

\begin{corollary} \label{cor:LieEq}
Let $\gamma:[0,T] \to M$ be a curve starting at $x \in M$, with $v(t) = (f^x)^{-1} \ptr_t^{-1} \dot \gamma(t)$. Assume that $\gamma(t)$ is a most probable path. Then there is a curve $B(t)$ in $\mathfrak{h}$ such that
$$\frac{d}{dt} (\Lambda^{-2} v(t), B(t)) =- \left[(v(t), 0), (\Lambda^{-2}v(t) , B(t)) \right]_{\mathfrak{g}} , \qquad B(T) = 0.$$
\end{corollary}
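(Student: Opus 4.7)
The plan is to push Theorem~\ref{th:ProbPath} into the parallel frame $f(t) = \ptr_t f^x$ and then observe that the desired $B(t)$ is literally the curvature image of $\chi(t)$. First, following Section~\ref{sec:RepParallel}, I would write $\dot\gamma(t) = f(t) v(t)$ with $v(t) = (f^x)^{-1}\ptr_t^{-1}\dot\gamma(t)$, note that $f(t)^{-1}\Sigma_{\ptr_t} f(t) = \Lambda^2$ since $\Sigma_{\ptr_t}$ is parallel along $\gamma$ with $f$ as an eigenframe, and expand $\chi(t) = \tfrac{1}{2}\sum_{i,j} X_{ij}(t)\, f_j(t)\wedge f_i(t)$ for a curve $t\mapsto X(t)$ in $\so(d)$. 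Because $f(t)$ is parallel, covariant derivatives along $\gamma$ become ordinary $t$-derivatives of frame components, so the system \eqref{eq:mpp} transcribes to
\begin{align*}
\dot v(t) &= \Lambda^{2}\, \underline{R}\!\left(X(t)\right) v(t), \\
\dot X(t) &= v(t) \wedge \Lambda^{-2} v(t), \qquad X(T) = 0.
\end{align*}
This is simply \eqref{eq:mpp_coords_Eig} repackaged; the hypothesis $\nabla R = 0$ enters precisely here, guaranteeing that $\underline{R}$ takes the same value at every frame in $G$, so the right-hand sides are autonomous once read in the frame.

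Next I would \emph{define} $B(t) := \underline{R}(X(t))$. By the Ambrose--Singer theorem the image of $\underline{R}$ spans $\mathfrak{h}$, so $B(t)\in\mathfrak{h}$ for every $t$, and the terminal condition is automatic: $B(T) = \underline{R}(X(T)) = 0$.

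It then remains to match both sides of the claimed equation. Unpacking the bracket in $\mathfrak{g}$ on the pair $((v,0),(\Lambda^{-2}v, B))$ gives
\[
-\bigl[(v,0),\,(\Lambda^{-2}v, B)\bigr]_{\mathfrak{g}} = \bigl(B v,\; \underline{R}(v, \Lambda^{-2} v)\bigr),
\]
using $[0,B] = 0$. Differentiating the left-hand side directly,
\[
\tfrac{d}{dt}\bigl(\Lambda^{-2} v\bigr) = \Lambda^{-2}\dot v = \Lambda^{-2}\Lambda^{2}\underline{R}(X)v = \underline{R}(X) v = B v,
\]
while
\[
\dot B = \underline{R}(\dot X) = \underline{R}\bigl(v\wedge \Lambda^{-2} v\bigr) = \underline{R}(v, \Lambda^{-2}v),
\]
the last equality being the bilinear definition of $\underline{R}$ on 2-vectors under the identification $\wedge^2\mathbb{R}^d \cong \so(d)$. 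The two components agree, which completes the verification.

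The main obstacle is purely bookkeeping around the identification $\wedge^2\mathbb{R}^d\cong\so(d)$ of Section~\ref{sec:RepParallel}: the factor $\tfrac12$ and the sign in $(a\wedge b)c = \langle a,c\rangle b - \langle b,c\rangle a$ must be kept consistent with the convention used in the proof of Theorem~\ref{th:ProbPath}. Since those identifications have already been fixed and validated against \eqref{eq:mpp_coords_Eig}, nothing further is needed.
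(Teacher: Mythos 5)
Your proof is correct and follows essentially the same route as the paper's: pass to the parallel eigenframe via Section~\ref{sec:RepParallel}, define $B(t)=\underline{R}(X(t))$ (this is exactly the paper's $\tfrac12\sum_{i,j}\chi_{ij}(t)\underline{R}(e_j,e_i)$), and read off the pair of equations matching the components of the $\mathfrak{g}$-bracket, with $B(T)=0$ coming from $\chi(T)=0$. You have simply spelled out the ``result follows'' step that the paper leaves implicit; note also that your $\dot B=\underline{R}(v,\Lambda^{-2}v)$ is the version consistent with the statement and the bracket formula, whereas the $\Lambda^{2}$ appearing in the paper's two-line proof is a typo.
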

\begin{proof}
In the notation of Section~\ref{sec:RepParallel}, if $B(t) =  \frac{1}{2} \sum_{i,j=1}^d \chi_{ij}(t) \underline{R}(e_j, e_i)$, then
$$\Lambda^{-2} \dot v(t) =B(t) v(t), \qquad \dot B(t) =  \underline{R}(v(t), \Lambda^{2} v(t)).$$
The result follows.
\end{proof}

Let us now continue to the case the structure of $\mathfrak{g}$ integrates to a Lie group structure on $G$ such that $M = G/H$ is a symmetric space. Define $\gamma: [0,T] \to M$ as the most probable path relative to a covariance $\Sigma \in \Sym^+ T_x M$. Let $\Lambda^2 = \diag\{ \lambda_1^2, \dots, \lambda_d^2\}$ be the eigenvalues of $\Sigma$ with an eigenframe $f^x$. Define $f: [0,T] \to G$ be the solution of $f(t)^{-1} \cdot \dot f(t) = (v(t), 0)$ and $f(0) = f^x$, i.e. the result of parallel transporting $f^x$. Define $\Phi(t) = (f^x)^{-1} \cdot f(t)$ and write $v_0 = (f^x)^{-1} \dot \gamma(0)$. Notice that
$$\Phi(t)^{-1} \cdot \dot \Phi(t) = (v(t), 0).$$
We then have the following result.
\begin{proposition}
Consider the maps $\pr_{\Lambda^{\pm 2}}: \mathfrak{g} \to \mathfrak{g}$ given by
$$\pr_{\Lambda^{\pm 2}} (a, A) = (\Lambda^{\pm2} a, 0), \qquad a \in \mathbb{R}^d, A \in \mathfrak{h}.$$
Then $\Phi(t)$ is a solution of
$$\dot \Phi(t) = \Phi(t) \pr_{\Lambda^2} \Ad(\Phi(t)^{-1} \Phi(T)) \pr_{\Lambda^{-2}} \Phi(T) \cdot \dot \Phi(T) .$$
\end{proposition}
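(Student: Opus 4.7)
The plan is to recognize Corollary~\ref{cor:LieEq} as an adjoint-invariance statement in $\mathfrak{g}$ and then use the terminal condition $B(T)=0$ to pin down the conserved quantity. Set
$$Y(t) := (\Lambda^{-2} v(t), B(t)) \in \mathfrak{g}, \qquad \xi(t) := \Phi(t)^{-1}\dot\Phi(t) = (v(t), 0),$$
so that Corollary~\ref{cor:LieEq} reads $\dot Y(t) = -[\xi(t), Y(t)]_{\mathfrak{g}}$.

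First I would differentiate the $\mathfrak{g}$-valued curve $t \mapsto \Ad(\Phi(t))\, Y(t)$ using the standard identity $\frac{d}{dt}\Ad(\Phi(t)) = \Ad(\Phi(t))\circ \ad(\xi(t))$; the two resulting contributions cancel exactly, so $\Ad(\Phi(t))\, Y(t)$ is constant on $[0,T]$. Evaluating at $t=T$ and using $B(T)=0$ gives
$$Y(T) = (\Lambda^{-2} v(T), 0) = \pr_{\Lambda^{-2}} \xi(T) = \pr_{\Lambda^{-2}}\bigl(\Phi(T)^{-1}\dot\Phi(T)\bigr),$$
so that for every $t$,
$$Y(t) = \Ad\bigl(\Phi(t)^{-1}\Phi(T)\bigr)\, \pr_{\Lambda^{-2}}\bigl(\Phi(T)^{-1}\dot\Phi(T)\bigr).$$

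To finish, I would apply $\pr_{\Lambda^2}$ to both sides. Since $\pr_{\Lambda^2}(\Lambda^{-2} a, A) = (a,0)$ for every $(a,A) \in \mathfrak{g}$, the left-hand side collapses to $(v(t),0) = \Phi(t)^{-1}\dot\Phi(t)$, and left-multiplying by $\Phi(t)$ yields the stated identity for $\dot\Phi(t)$.

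The main obstacle is justifying the passage from the abstract $\mathfrak{g}$-valued ODE of Corollary~\ref{cor:LieEq} to the group-level computation above. This requires both that $\Phi(t)$ genuinely is the $G$-development of $t\mapsto (v(t),0)$ and that the algebraic brackets $[\cdot,\cdot]_{\mathfrak{g}}$ defined via the curvature coincide with the Lie brackets arising from multiplication in $G$. Both facts are built into the hypothesis that $M=G/H$ is symmetric with $G$ integrating $\mathfrak{g}$, together with the Ambrose--Singer identification of $\mathfrak{h}$ as the image of $\underline{R}$; once this identification is in hand, the rest is a purely formal manipulation of the adjoint action.
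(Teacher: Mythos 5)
Your proposal is correct and follows essentially the same route as the paper: both show $\Ad(\Phi(t))(\Lambda^{-2}v(t),B(t))$ is conserved via Corollary~\ref{cor:LieEq}, evaluate the constant at $t=T$ using $B(T)=0$, and recover $\dot\Phi(t)$ from $(v(t),0)=\Phi(t)^{-1}\dot\Phi(t)$ by applying $\pr_{\Lambda^2}$. Your version merely spells out the $\frac{d}{dt}\Ad(\Phi(t))=\Ad(\Phi(t))\circ\ad(\xi(t))$ cancellation and the standing identification of the curvature bracket with the group bracket, which the paper leaves implicit.
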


\begin{proof}
 We then see that by Corollary~\ref{cor:LieEq},
$$\frac{d}{dt} \Ad(\Phi(t)) (\Lambda^{-2} v(t), B(t)) = (0, 0).$$
Hence, for some constant $(c, C) \in \mathfrak{g}$, we have
$$\Ad(\Phi(t)) (\Lambda^{-2} v(t), B(t)) =  (c,C).$$
Inserting $t=T$, we have
$$\Ad(\Phi(T))(\Lambda^{-2} v_T, 0 )= (c, C).$$
Using that $(v(t),0) = \Phi(t)^{-1} \cdot \dot \Phi(t)$, we have the result.
\end{proof}

\begin{example}
Consider the special case of $M = S^d$, where $G = \SO(d+1)$. In this case, $\mathfrak{g} = \so(d+1)$ and we write
$$\mathfrak{g} = \mathfrak{m} \oplus \mathfrak{h} = \left\{ \bar{A} = \begin{pmatrix} A & a \\ - a^\dagger & 0 \end{pmatrix} \, : \, A \in \so(d), a \in \mathbb{R}^d \right\},$$
where $\mathfrak{m}$ and $\mathfrak{h}$ correspond to respectively the cases when $A =0$ and $a = 0$. If we define
$$I = \begin{pmatrix}  1_d & 0 \\ 0 & 0 \end{pmatrix}, \qquad \bar{\Lambda} = \begin{pmatrix} \Lambda & 0 \\ 0 & 1 \end{pmatrix},$$
we then see that
$$\pr_{\Lambda^{\pm 2}} \bar{A}=  \bar{\Lambda}^{\pm 1} (\bar{A} - I  \bar{A} I) \bar{\Lambda}^{\pm 1}.$$
The equation we have to solve is then
\begin{align*}
\dot \Phi(t) & = \Phi(t) \Lambda (\Phi(t)^{-1} \bar{B} \Phi(t) - I \Phi(t)^{-1} \bar{B} \Phi(t) I) \Lambda,
\end{align*}
with $\bar{B} = \Phi(T) \Lambda^{-1} ( \Phi(T) \cdot \dot \Phi(T) - I \Phi(T) \cdot \dot \Phi(T) I) \bar{\Lambda}^{-1}  \Phi(T)^{-1}$.
\end{example}

\section{Algorithms}
\label{sec:algorithms}
We here describe algorithms for computing mean and covariance on both $S^2$ and more general cases. On $S^2$ we obtain a very efficient approximate solution, and on general manifolds using constrained optimization. In addition, we describe strategies for numerically integrating the most probable path dynamical equation, and how to optimize over those using automatic differentiation.

\subsection{Mean and covariance on $S^2$} We show a particular case for constructing an algorithm for the unit sphere $S^2$.
Let $y_1, \dots, y_n$ be i.i.d. samples on $M$. We want to find $\Sigma = C\Sigma'$ that minimize \eqref{eq:sample_est}.
We only need to find $\Sigma'$ and then can determine $C$ by \eqref{C}. 
\begin{enumerate}[\rm (1)]
\item Using a $2$ to $1$ surjection, we can see the set $\{ \tilde \Sigma \in \Sym^+ TS^2 \, : \, \det \tilde \Sigma =1 \}$ as the image of $\mathbb{R}_{\geq 0} \times \SO(3)$. We do this by associating each element $(a, q)$, $a\geq 0$, $q=(q_1, q_2, q_3)$ with the symmetric map $\tilde \Sigma \in \Sym^+ T_{q_3} S^2$  such that
$$\tilde \Sigma q_1 = e^{2a} q_1, \qquad \tilde \Sigma q_2 = e^{-2a} q_2.$$
\item We generate a `lattice' of comparison points on $S^2$. This points set only has to be generated once, and can then be reused for any dataset.
Choose $a_{\max} >0$ and $n_a \in \mathbb{N}$ and define
$$a_l = \frac{l}{n_a} a_{\max}, \qquad \alpha_l = \sqrt{e^{2a_l} -e^{-2a_l}}, \qquad T_{\max,l} = 2\pi e^{a},  l=0,1, \dots, n_a.$$
The value $T_{\max,l}$ is chosen so that the geodesic going along the eigenvector of $e^{-a}$ from the north pole has time to reach the south pole.
Next, for a chosen $n_{\psi}, n_T \in \mathbb{N}$, and for $i= 1, \dots, n_\psi$, $j = 1, \dots, n_R$, define
$$T_{j,l} = \frac{j}{n_T} T_{\max,l}, \qquad \psi_i = \frac{i}{n_\psi} \pi, \qquad k_i = \sin^{-1} (\psi_i/2).$$
Finally, we define
\begin{align*}
v_{i,j,l}(t) &= e^{a_l} \dn \left( K(k_i) + \alpha_l(T_{j,l}-t), k_i \right), \\
w_{i,j,l}(t) & =  e^{-a_l} k_i \sn \left( K(k_i) + \alpha_l (T_{j,l}-t), k_i  \right).
\end{align*}

Let $N = (0,0,1)^\dagger$ be the north pole and let $\Sigma'_l \in \Sym^+ T_N S^2$ be the symmetric endomorphism with eigenvalue $e^{2a_l}$ and $e^{-2a_l}$ in respectively the directions $(1,0,0)^\dagger$ and $(0,1,0)^\dagger$.
Define $q(t) = q_{i,j,l}(t)$ in $\SO(3)$ as solutions of
$$q(t)^{-1} \dot q(t) =\begin{pmatrix} 0 & 0 & v_{i,j,l}(t) \\ 0 & 0 & w_{i,j,l}  \\ - v_{i,j,l}(t) & - w_{i,j,l}(t) & 0 \end{pmatrix}, \qquad q(0) = 1_3 .$$
Finally, define $z_{i,j,l} = q_{i,j,l}(T_{j,l}) N$ and for $i >0$, define its mirror in the $y,z$-axis,
$z_{\pm i, \pm j,l} = \diag\{ \pm 1, \pm 1, 1\} z_{i,j,l}$. Then $z_{i,j,l}$ are all endpoints of most probable paths with length $T_{j,l}$.

In summary, we need to solve $(n_a +1) \times n_\psi \times n_T$-ODEs in $\SO(3)$. We will use the data $z_{i,j,l}$, $T_{j,l}$, $a_l$ in what follows.
\item We use the previous data to make an approximation to $d_\rho(\pi^{-1}(y), (N, \Sigma_l))$. We have a bound
$$d(\pi(y),(N,\Sigma_l)) \leq T_{j,l} + e^{a_l} \cos^{-1}(y^\dagger z_{i,j,l}),$$
from the fact that the most probable path in the direction of the eigenvalue $e^{-a_l}$ is the slowest moving in the Riemannian metric.
Define a function
$$\mathrm{Dist}_l(y) = \min_{\begin{subarray}{c} i = 0, \pm 1, \dots, \pm m_1 \\ j=1, \dots, m_2 \end{subarray}} \left( T_{j,l} + e^{a_k} \cos^{-1}(y^\dagger z_{i,j,l}) \right), \qquad y \in S^2.$$
\item Finally, find we define $a_{l'} \times q' \in \mathbb{R} \times \Ort(3)$ as the element corresponding to the best choice $\Sigma'$ by
$$l' \times q' = \argmin_{\begin{subarray}{c} l=0,1, \dots, n_a \\ q \in \Ort(3) \end{subarray}} \sum_{r=1}^n \mathrm{Dist}_l(q^{-1} y_r)^2.$$
This can be done by using an optimizer in $q$ or optimizing over a grid $n_1 \times n_2 \times n_3$ of $\{ q_{i,j,k}' \, : \,  i =1, \dots, n_1, j=1, \dots, n_2, k=1,\dots, n_3\}$, where
\begin{multline}
q_{i,j,k}' = \begin{pmatrix} \cos( \frac{2\pi i}{n_1-1}) & \sin( \frac{2\pi i}{n_1-1}) & 0 \\ - \sin( \frac{2\pi i}{n_1-1}) & \cos( \frac{2\pi i}{n_1-1}) & 0 \\ 0 & 0 & 1 \end{pmatrix} 
\begin{pmatrix} \cos( \frac{2\pi i}{n_2-1}) & 0 & \sin( \frac{2\pi i}{n_2-1})  \\ 0 & 0 & 0 \\ - \sin( \frac{2\pi i}{n_2-1}) & 0  & \cos( \frac{2\pi i}{n_2-1})  \end{pmatrix} \\
\begin{pmatrix} 1 & 0 & 0 \\ 0 & \cos( \frac{2\pi i}{n_3-1}) & \sin( \frac{2\pi i}{n_3-1})  \\ 0 & - \sin( \frac{2\pi i}{n_3-1}) & \cos( \frac{2\pi i}{n_3-1})  \end{pmatrix}.
\end{multline}
\end{enumerate}

\subsection{General geometries}
For general Riemannian manifolds, the system \eqref{eq:mpp_coords} can be solved numerically by integrating $v(t),f(t)$ and $\chi(t)$ forward and solving for $\chi(T)=0$. Algorithm~\ref{alg:mpp} shows a simple gradient-based approach for finding most probable paths from a starting point $\Sigma\in \Sym^+ T_x M$ to $y\in M$.

\begin{algorithm} \DontPrintSemicolon \SetAlgoLined
  \KwData{$S\in \Sym^+\mathbb R^d$, $y\in M$, $f^x\in O(TM),\ T,\delta>0$}
  \KwResult{$v(0),\chi(0)$ s.t. $d(\gamma(T),y)^2+\|\chi(T)\|^2\le\delta$}
  \While{$d(\gamma(T),y)^2+\|\chi(T)\|^2>\delta$} {
    1. numerically integrate forward $\gamma(t)$ and $\chi(t)$\\
    2. compute gradient $g(v(0),\chi(0))\gets \nabla_{v(0),\chi(0)}\big(d(\gamma(T),y)^2+ \|\chi(T)\|^2\big)$\\
    3. update initial conditions: $(v(0),\chi(0))\gets (v(0),\chi(0))-\ve g(v(0),\chi(0))$
  }
  \caption{Most probable path from $\Sigma$ to $y$}
  \label{alg:mpp}
\end{algorithm} 
The deviation $d(\gamma(T),y)$ between the endpoint $\gamma(T)$ and the target $y$ can be replaced by, for example, the Euclidean distance in a chart or using an embedding of $M$.

The gradients of $d(\gamma(T),v)^2$ and $\|\chi(T)\|^2$ with respect to the initial conditions $v(0)$ and $\chi(0)$ can be derived by solving the adjoint of \eqref{eq:mpp_coords}. This can be achieved directly using automatic differentiation frameworks that implement the adjoint equations implicitly with reverse automatic differentiation. 
In practice, the gradient-descend algorithm above can be replaced by quasi-Newton methods such as BFGS to improve convergence.

Figure~\ref{fig:intro} shows examples of most probable paths on sphere $S^2$ and the hyperbolic space $H^2$ computed using Algorithm~\ref{alg:mpp}, and with derivatives computed using the Jax automatic differentiation framework \cite{jax2018github}, implemented in the JaxGeometry\footnote{\url{https://bitbucket.org/stefansommer/jaxgeometry}} package.

%The manifolds are represented by charts, and the Riemannian metric tensor $g$ is the pull-back to the chart of the embedding \dots The Christoffel symbols and curvature tensor can be computed by automatic differentiation of $g$. The allows to compute the right-hand side of system XX. The numerical integration is performed by a standard Euler integrator implemented in Jax to ensure that the integrator is automatically differentiable.

%
%\begin{figure}[h]
%    \centering
%    \begin{subfigure}[b]{\textwidth}
%        \centering
%        %\includegraphics[width=0.25\linewidth,trim=100 100 100 100]{S2_MPP_Sigma_05_05.pdf}
%        %\includegraphics[width=0.23\linewidth]{S2_MPP_vs_Sigma_05_15.pdf}
%        %\includegraphics[width=0.25\linewidth,trim=100 100 100 100]{S2_MPP_Sigma_05_15.pdf}
%        %\includegraphics[width=0.23\linewidth]{S2_MPP_vs_Sigma_05_15.pdf}
%    \end{subfigure}
%    \caption{Most probable paths on $S^2$, $T^2$ and $H^2$ with corresponding anti-development $v(t)$ computed using Algorithm~\ref{alg:mpp} with $S=\mathrm{diag}(.5,.5)$ (a,b) and $S=\mathrm{diag}(.5,1.5)$.}
%    \label{fig:S2_MPP}
%\end{figure}

\subsection{Mean and covariance estimation}
\label{sec:statistics}
Given i.i.d. samples $y_1,\dots,y_n$, the mean and covariance estimator \eqref{eq:sample_est} can in general be found by solving the constrained minimization problem
\begin{equation}
  \begin{split}
  &
  \argmin_{\Sigma \in \Sym^+ TM, (v_1(0),\chi_1(0)),\dots,(v_n(0),\chi_n(0))}  \sum_{j=1}^n \left(v_i(0)^TS^{-2}v_i(0)+\ln\det_g\Sigma\right)
  \\
  &\mathrm{s.t.}\ 
  (\gamma(T)=y_1, \chi_1(T)=0),\dots,(\gamma(T)=y_n, \chi_n(T)=0)
  \end{split}
  \label{eq:constrained}
\end{equation}
where $\gamma_i(t),\chi_i(t)$ are the trajectories defined by \eqref{eq:mpp} with initial conditions $v_i(0),\chi_i(0)$. 

Let $F:\Sym^+ TM\times(\mathbb R^d\times\wedge^2\mathbb R^d)^{\times n}\to\mathbb R$ denote the objective function of \eqref{eq:constrained}.
The velocities $v_i(0)$ on which $F$ is evaluated in \eqref{eq:constrained} depend on $\Sigma$. Let $G:\Sym^+ TM\times\mathbb R^d\times\wedge^2\mathbb R^d\to\mathbb R^d\times \wedge^2\mathbb R^d$ denote a map $\Sigma,v,\chi\mapsto\begin{pmatrix}\gamma(T)-v\\\chi(T)\end{pmatrix}$ encoding the constraints such that $G(\Sigma,v_i(0),\chi_i(0))=0$, for example using a chart around $v$ to express the end-point difference $\gamma(T)-v$. The inverse function theorem implies that
\begin{equation}
  D_\Sigma v_i(0)
  =
  -\big(D_{v,\chi}G|_{\Sigma,v_i(0),\chi_i(0)}\big)^{-1}D_\Sigma G|_{\Sigma,v_i(0),\chi_i(0)}
  .
  \label{eq:Dvi}
\end{equation}
Thus, an infinitesimal change $\delta\Sigma$ of $\Sigma$ results in the variation 
\begin{equation}
  \begin{split}
  \delta F
  &=
  \nabla_\Sigma F\delta\Sigma+\sum_{i=1}^n\nabla_{v_i}FD_\Sigma v_i(0)\delta\Sigma
  \\
  &=
  \nabla_\Sigma F\delta\Sigma-\sum_{i=1}^n\nabla_{v_i}F\big(D_{v,\chi}G|_{\Sigma,v_i(0),\chi_i(0)}\big)^{-1}D_\Sigma G|_{\Sigma,v_i(0),\chi_i(0)}\delta\Sigma
  .
  \label{}
  \end{split}
\end{equation}
This leads to the iterative procedure for solving \eqref{eq:constrained} listed in Algorithm~\ref{alg:mpp_mean}.
\begin{algorithm} \DontPrintSemicolon \SetAlgoLined
  \KwData{$y_1,\dots,y_n\in M$, $f^x\in O(TM),\ T,\delta>0$}
  \KwResult{$\Sigma$ (locally) minimizing \eqref{eq:constrained}} 
  \While{$\max(\|\nabla_\Sigma F\|,\|\nabla_{(v_1,\chi_1)} \|G\|^2\|,\dots,\|\nabla_{(v_n,\chi_n)} \|G\|^2\|)>\delta$} {
    1. for $i=1,\dots,n$, numerically integrate forward $\gamma_i(t)$ and $\chi_i(t)$\\
    2. compute $\nabla_\Sigma F$, $D_{(v_i,\chi_i)} G$, $\|\nabla_{(v_i,\chi_i)} \|G\|^2\|$\\
    3. update $\Sigma$: $\Sigma\gets \Sigma-\ve (\nabla_\Sigma F+\sum_{i=1}^n\nabla_{v_i}FD_\Sigma v_i(0))$\\
    4. update $(v_i(0),\chi_i(0))$: $(v_i(0),\chi_i(0))\gets (v_i(0),\chi_i(0))-\ve \nabla_{(v_i,\chi_i)} \|G\|^2$
  }
  \caption{Most likely $\Sigma$ for observation $y_1,\dots,y_n$.}
  \label{alg:mpp_mean}
\end{algorithm} 

The right-hand side of \eqref{eq:Dvi} is in Algorithm~\ref{alg:mpp_mean} evaluated at the current guess for $(v_i(0),\chi_i(0))$ and hence provides only an approximation to the true derivative. This implies that the stability of the algorithm can increase by taking multiple update steps for the initial conditions $(v_i(0),\chi_i(0))$ for each update step for $\Sigma$. The convergence rate can in addition be increased by using e.g. descent-schemes with momentum such as the ADAM optimizer instead of pure gradient descent.

Figure~\ref{fig:estimation} show examples of estimation of $\Sigma$ on the sphere $S^2$, $T^2$ and $H^2$ using Algorithm~\ref{alg:mpp_mean}.
\begin{figure}[h]
    \centering
    \begin{subfigure}[b]{\textwidth}
        \centering
        \includegraphics[width=0.28\linewidth,trim=200 250 200 200,clip]{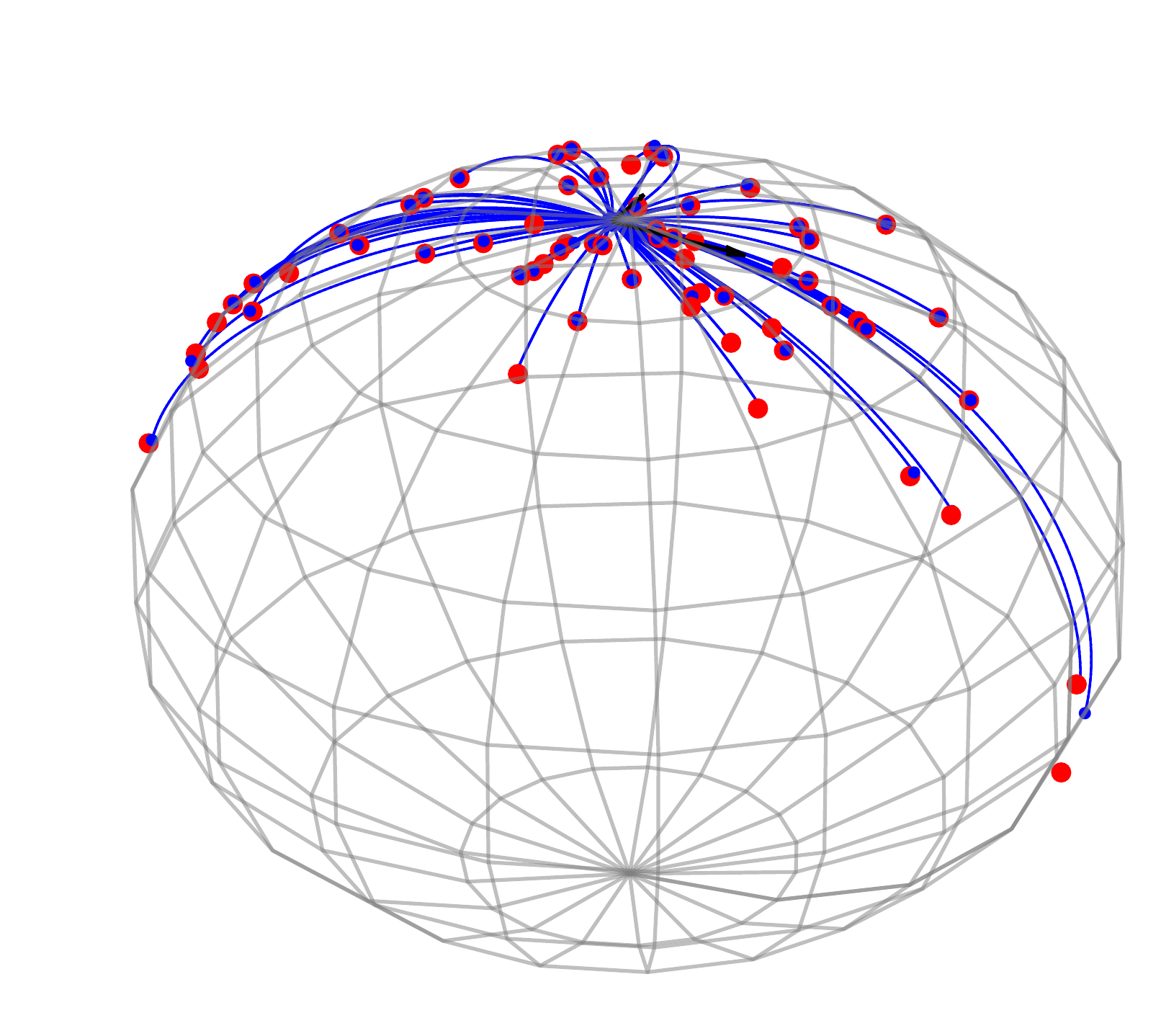}
        \includegraphics[width=0.37\linewidth,trim=150 250 100 200,clip]{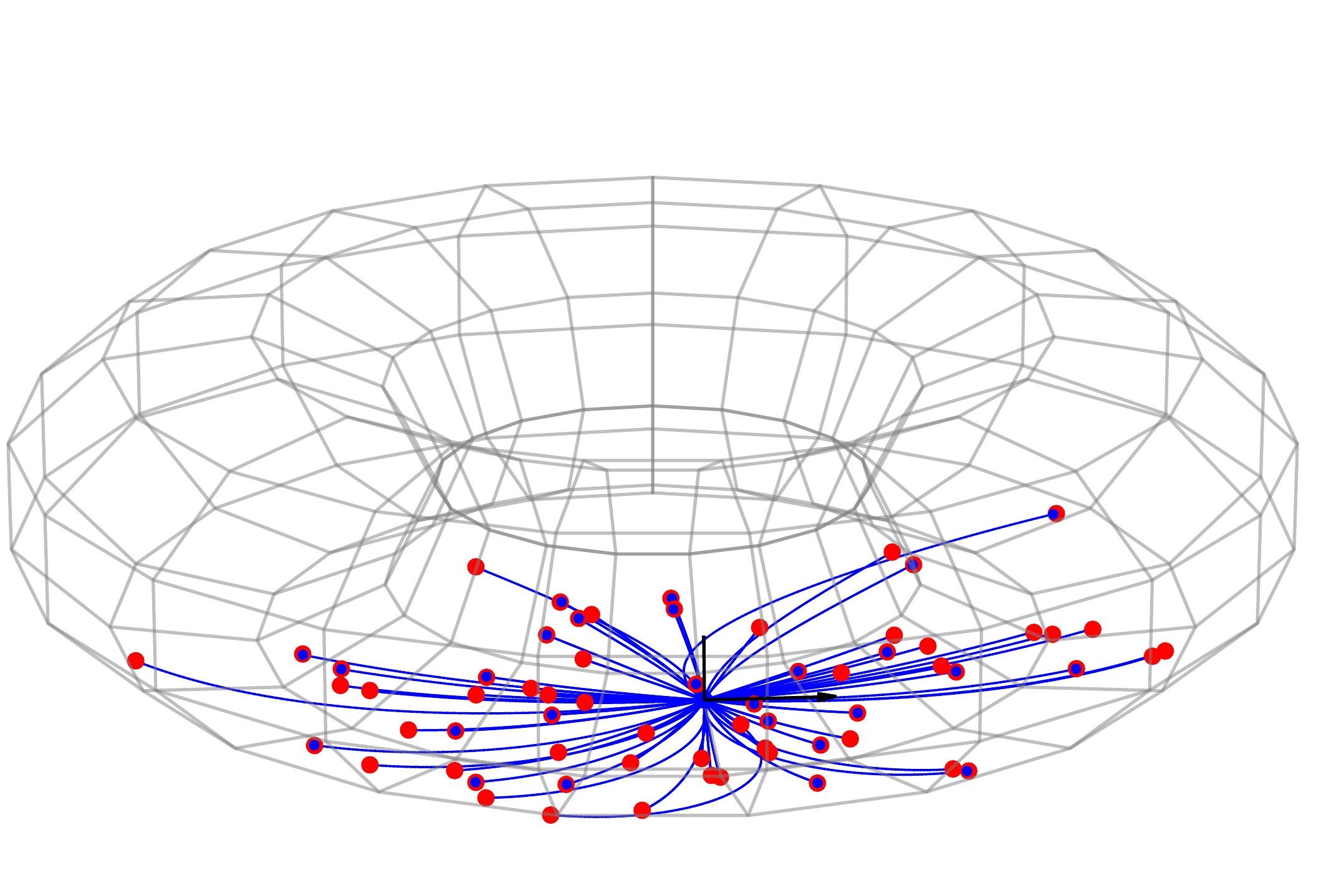}
        \includegraphics[width=0.27\linewidth,trim=170 350 230 100,clip]{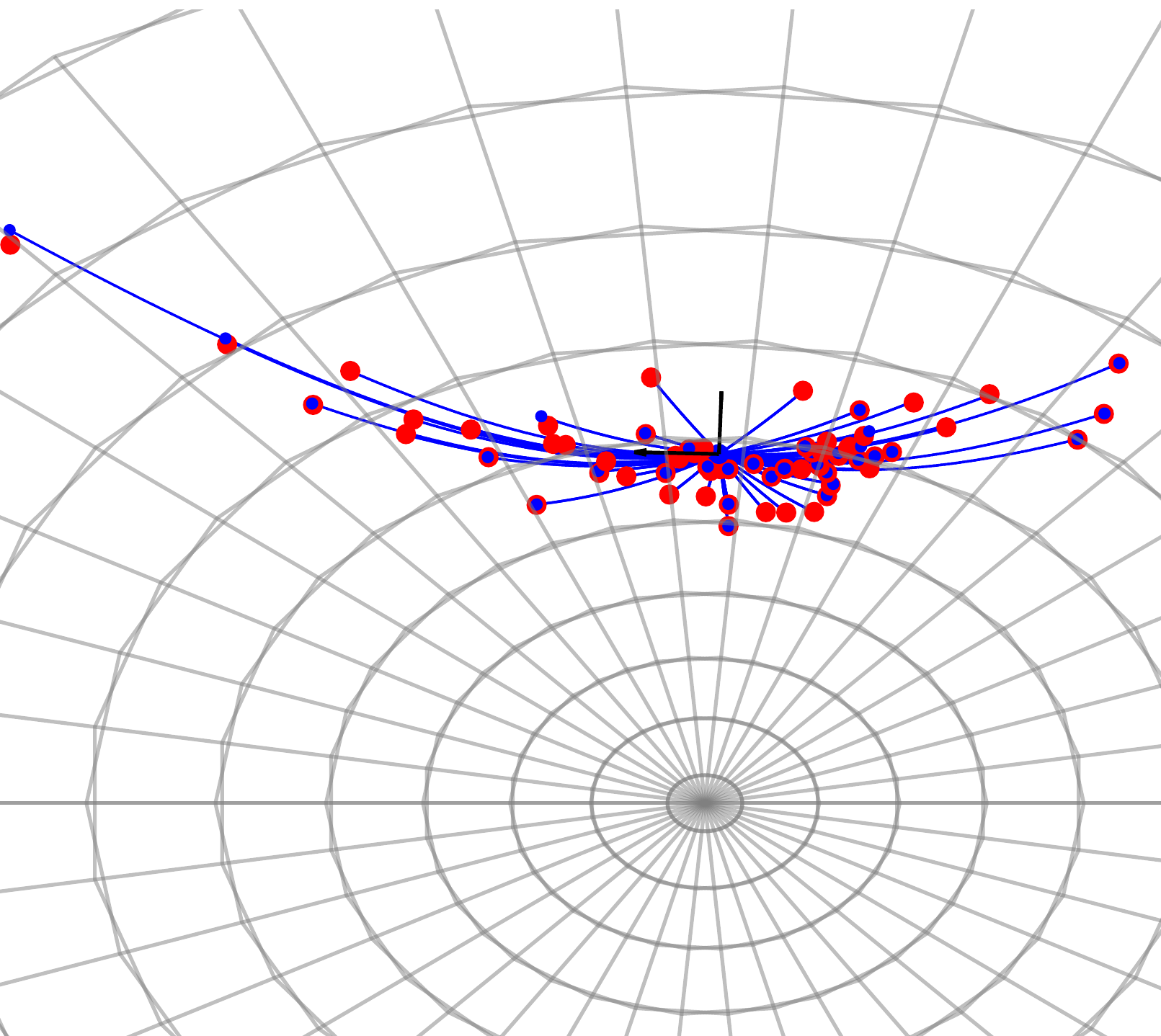}
    \end{subfigure}
    \caption{Mean and covariance estimation on the sphere $S^2$ and the embedded torus $T^2$ using Algorithm~\ref{alg:mpp_mean}. Blue curves shows MPPs from the estimated mean to the 64 samples (red points).}
    \label{fig:estimation}
\end{figure}

\appendix

\section{Definition of sub-Riemannian geometry} \label{sec:SR}
We give a quick introduction to sub-Riemannian geometry and refer to \cite{Mon02} for details. A sub-Riemannian manifold is a triple $(M, E,\rho)$ where $M$ is a connected manifold, $E$ is a subbundle of the tangent bundle $TM$ and $\rho = \langle \cdot , \cdot \rangle_\rho$ is a metric tensor defined only on $E$. This tensor defines a vector bundle morphism $\sharp^\rho: T^*M \to E \subseteq TM$ given by
$$\alpha(v) = \langle \sharp^\rho \alpha, v \rangle_\rho.$$
Consequently, we obtain a positive semi-definite symmetric tensor $\rho^* = \langle \cdot , \cdot \rangle_{\rho^*}$ on $T^*M$ defined by
$$\langle \alpha, \beta \rangle_{\rho^*} = \langle \sharp^\rho \alpha, \sharp^\rho \beta \rangle_\rho.$$
This tensor degenerates along the subbundles $\Ann(E) \subseteq T^*M$ of covectors vanishing on~$E$. It follows that a sub-Riemannian manifold can equivalently be defined as a connected manifold with positive, semi-definite cometric $\rho^*$ that degenerates along a subbundle.

An absolutely continuous curve $\gamma:[0,T] \to M$ is called \emph{horizontal} if $\dot \gamma(t) \in E_{\gamma(t)}$ for almost every $t$.
For such a curve, we define its length to be
$$L^\rho(\gamma) = \int_0^T | \dot \gamma|_\rho(t)\, dt.$$
This length is invariant under reparametrization, so we can restrict our considerations to the case $T =1$.

For any $x, y \in M$, we define
$$d_\rho(x,y) = \inf \left\{ L^\rho(\gamma)  \, : \, \begin{array}{c} \text{$\gamma:[0,1] \to M$ horizontal} \\ \gamma(0) = x, \gamma(1) = y \end{array} \right\}.$$
We notice that if there are no horizontal curves connecting the two points, then $d_\rho(x,y) =\infty$. Let $x \in M$ be a given point. We define $\mathcal{C}(x)$ as the space of all horizontal curves defined on $[0,1]$, with $L^2$-derivative, that start in $x$. This collection has a natural structure of a Hilbert manifold, see \cite[Chapter~5.1]{Mon02} for details. Define a mapping
$$\Pi: \mathcal{C}(x) \to M, \qquad \gamma \mapsto \gamma(1).$$
Define $\mathcal{C}(x,y) = \Pi^{-1}(y)$. A point $\gamma \in \mathcal{C}(x,y)$ is called \emph{regular} if $\Pi_{*,\gamma}:T_\gamma \mathcal{C}(x) \to T_y M$ is surjective. Otherwise, $\gamma$ is called \emph{singular} or \emph{abnormal} curves.

Assume that $\mathcal{C}(x,y)$ non-empty. Define $F: \mathcal{C}(x,y) \to \mathbb{R}$ by $\gamma \mapsto L^\rho(\gamma)$. We look at minimal elements in $\mathcal{C}(x,y)$ with respect to $F$. If $\gamma$ is a regular curve, then $\mathcal{C}(x,y)$ locally has the structure of a Hilbert manifold around $\gamma$ by the inverse function theorem. Hence, any regular minimal element must be a critical, i.e. we must have $F_{*,\gamma} = 0$. Such curves are called \emph{normal geodesics}, and will always be locally length minimizing. It can be shown that all such curves, up to reparametrization, be found as a projecting of a solution of a Hamiltonian system. The Hamiltonian is given by
$$P(\alpha) = \frac{1}{2} \langle \alpha, \alpha\rangle_{\rho^*}, \qquad \alpha \in T^*M.$$
In conclusion, length minimizers are either normal geodesics or abnormal curves. These classes of curves are not necessarily disjoint.

We say that $E$ is bracket-generating if for every point $x \in M$,
$$\spn \{ X_i, [X_i,X_j], [X_i, [X_j, X_k]], \dots, \} |_x \in T_xM, \qquad X_i \in \Gamma(E),$$
that is, if sections of $E$ generate the entire tangent bundle $TM$. If this condition holds, then any pair of points can be connected by a horizontal curve. The value of $d_\rho$ is always finite, and furthermore, it induces the same topology as the manifold topology.

\begin{remark}
Let $L$ be a second order operator on $M$ without constant term, such that for any pair of smooth functions $f,g \in C^\infty(M)$,
$$L(fg) - fLg - gL f = \langle df, dg \rangle_{\rho^*}.$$
In other words, locally, $L$ can always be written as $L = \sum_{j=1}^{\rank E} V_i^2 + V_0$, where $V_1, \dots, V_{\rank E}$ is a local orthonormal basis of $(E,\rho)$. If $E$ is bracket generating, then $L$ is hypoelliptic~\cite{Hor67} and its heat semigroup $p_t(x;y)$ has a strictly positive density~\cite{StVa72}.
\end{remark}

\section{Sub-Riemannian normal geodesics on $(\Sym^+ TM, E, \rho)$} \label{sec:NormalGeo}
By the discussion in Section~\ref{sec:Alternative}, it follows that we can write a normal geodesic as $\Sigma(t) = f(t)^{-1} S^2 f(t)$ where $f(t)$ is a normal geodesic in $(\Ort(TM), \calH, \rho_S)$. We do the computations here.

Recall the definition of the vector fields $H_a$, $a \in \mathbb{R}^n$ and $\xi_A$, $A \in \so(n)$ in Section~\ref{sec:GeometryFrame}. We introduce corresponding Hamiltonian functions
$$P_a(\alpha) = \alpha(H_a), \qquad Q_A(\alpha) = \alpha(\xi_A), \qquad \alpha \in T^* \Ort(TM).$$ 
Our formulas in \eqref{Hxibrackets}, then give corresponding relations in terms of Poisson brackets
$$\{ P_a, P_b\} = Q_{\underline{R}(a,b)}, \qquad \{ Q_A, P_a\} = - P_{Aa}, \qquad \{ Q_A, Q_B\} = - Q_{[A,B]}.$$

Since $H_{Se_1}, \dots, H_{Se_d}$ is a global orthonormal basis, we have that the sub-Riemannian Hamiltonian is given by
$$P = \frac{1}{2} \sum_{j=1}^d P_{Se_j}^2.$$
Let $\lambda(t) = e^{t\vec{P}}(\lambda_0)$ be a solution in $T^* FM$ along $f(t)$ in $FM$ and define curves $v(t)$ in $\mathbb{R}^d$ and $A(t)$ in $\so(d)$ by
$$P_a(t) = P_a(\lambda(t)) = \langle S^{-2} v(t), a \rangle , \qquad Q_B(t) = Q_B(\lambda(t)) = - \langle A(t), B \rangle.$$ 
Then along a solution, we have
\begin{align*}
\langle S^{-2} \dot v, a \rangle & = \dot P_a = \{ P_a, P\} =  \sum_{j=1}^d P_{Se_j} Q_{\underline{R}(a, Se_j)} =  -  \sum_{j=1}^d \langle S^{-2} v, Se_j \rangle  \langle A, \underline{R}(a, Se_j) \rangle \\
& =  \sum_{j=1}^d \langle S^{-1} v, e_j \rangle  \langle \underline{R}(A)S e_j, a\rangle = \langle \underline{R}(A) v, a\rangle \\
- \langle \dot A(t) , B \rangle & = - \dot Q_B  = - \{ Q_B, P \} = \sum_{j=1}^d P_{Se_j} P_{BSe_j} =  \sum_{j=1}^d \langle S^{-2}v, Se_j \rangle \langle BS e_j ,S^{-2} v \rangle \\
& =  \langle B v, S^{-2}v \rangle = \langle v \wedge S^{-2} v, B \rangle.
\end{align*}
In summary, $\dot v = S^2 \underline{R}(A) v$ and $\dot A =  v \wedge S^{-2} v$.

Let $\beta \in \Gamma(T^* \Ort(TM))$ be a one-form on $\Ort(TM)$ and define the corresponding vertical lift $\vl \beta \in \Gamma(T(T^* \Ort(TM)))$ by
$$\vl \beta |_\alpha = \frac{d}{dt} (\alpha + t\beta_f) |_{t=0}, \qquad \alpha \in T_f^* \Ort(TM).$$  
Let $\vartheta$ be the Liouville one form $\vartheta|_\alpha = \pi^* \alpha$ with canonical symplectic form
$\sigma = - d\vartheta$. Observe that $\sigma(\vl \beta, \, \cdot \, ) = - (\pi^* \beta)(\, \cdot \,)$. Then
\begin{align*}
dP(\vl \beta) & = \sum_{j=1}^d P_{S e_j}  \beta(H_{S e_j} ) = \sigma( \vec{P}, \vl \beta) = \beta(\pi_* \vec{P}).
\end{align*}
If we consider this along the curve, we have
\begin{align*}
& \sum_{j=1}^d P_{S e_j} \beta(H_{S e_j} ) |_{\lambda(t)} = \sum_{j=1}^d \langle S^{-1}v(t),  e_j \beta(H_{S e_j} ) |_{f(t)} = \beta(H_{v(t)} ) |_{f(t)} \\
& = \beta(\pi_* \vec{P}) |_{\lambda(t)} = \beta(\dot f(t)) 
\end{align*}
It follows that
$$\dot f(t) = H_{v(t)}, \qquad \dot v(t) = S^2 \underline{R}(f(t))(A(t)) v(t), \qquad \dot A(t) =v(t) \wedge S^{-2} v(t).$$
We see that these are exactly the equations of found in the proof of Theorem~\ref{th:ProbPath} without the condition $A(T) = 0$.
\bibliographystyle{abbrv}
\bibliography{Bibliography,library}

\begin{thebibliography}{10}

\bibitem{jax2018github}
J.~Bradbury, R.~Frostig, P.~Hawkins, M.~J. Johnson, C.~Leary, D.~Maclaurin,
  G.~Necula, A.~Paszke, J.~VanderPlas, S.~{Wanderman-Milne}, and Q.~Zhang.
\newblock {{JAX}}: Composable transformations of {{Python}}+{{NumPy}} programs,
  2018.

\bibitem{DiffMan}
K.~Eng\o, A.~Marthinsen, and H.~Z. Munthe-Kaas.
\newblock The diffman package on github.
\newblock \url{https://github.com/kenthe/DiffMan}.
\newblock Accessed: 2021-10-19.

\bibitem{frechet_les_1948}
M.~Frechet.
\newblock Les \'el\'ements al\'eatoires de nature quelconque dans un espace
  distancie.
\newblock {\em Ann. Inst. H. Poincar\'e}, 10:215--310, 1948.

\bibitem{fujita_onsager-machlup_1982}
T.~Fujita and S.-i. Kotani.
\newblock The {{Onsager}}-{{Machlup}} function for diffusion processes.
\newblock {\em Journal of Mathematics of Kyoto University}, 22(1):115--130,
  1982.

\bibitem{hansen_diffusion_2021}
P.~Hansen, B.~Eltzner, S.~F. Huckemann, and S.~Sommer.
\newblock Diffusion {{Means}} in {{Geometric Spaces}}.
\newblock {\em arXiv:2105.12061}, May 2021.

\bibitem{hansen_diffusion_2021-1}
P.~Hansen, B.~Eltzner, and S.~Sommer.
\newblock Diffusion {{Means}} and {{Heat Kernel}} on {{Manifolds}}.
\newblock {\em Geometric Science of Information 2021}, Feb. 2021.

\bibitem{Hor67}
L.~H{\"o}rmander.
\newblock Hypoelliptic second order differential equations.
\newblock {\em Acta Math.}, 119:147--171, 1967.

\bibitem{Hsu02}
E.~P. Hsu.
\newblock {\em Stochastic analysis on manifolds}, volume~38 of {\em Graduate
  Studies in Mathematics}.
\newblock American Mathematical Society, Providence, RI, 2002.

\bibitem{malliavin_stochastic_1976}
P.~Malliavin.
\newblock Stochastic calculus of variation and hypoelliptic operators.
\newblock In {\em Proceedings, {{International Symposium}} on {{SDE}}},
  {Kyoto}, 1976.

\bibitem{Mon02}
R.~Montgomery.
\newblock {\em A tour of subriemannian geometries, their geodesics and
  applications}, volume~91 of {\em Mathematical Surveys and Monographs}.
\newblock American Mathematical Society, Providence, RI, 2002.

\bibitem{pennec_intrinsic_2006}
X.~Pennec.
\newblock Intrinsic {{Statistics}} on {{Riemannian Manifolds}}: {{Basic Tools}}
  for {{Geometric Measurements}}.
\newblock {\em J. Math. Imaging Vis.}, 25(1):127--154, 2006.

\bibitem{Sha97}
R.~W. Sharpe.
\newblock {\em Differential geometry}, volume 166 of {\em Graduate Texts in
  Mathematics}.
\newblock Springer-Verlag, New York, 1997.
\newblock Cartan's generalization of Klein's Erlangen program, With a foreword
  by S. S. Chern.

\bibitem{Shi82}
I.~Shigekawa.
\newblock On stochastic horizontal lifts.
\newblock {\em Z. Wahrsch. Verw. Gebiete}, 59(2):211--221, 1982.

\bibitem{sommer_anisotropic_2015}
S.~Sommer.
\newblock Anisotropic {{Distributions}} on {{Manifolds}}: {{Template
  Estimation}} and {{Most Probable Paths}}.
\newblock In {\em Information {{Processing}} in {{Medical Imaging}}}, volume
  9123 of {\em Lecture {{Notes}} in {{Computer Science}}}, pages 193--204.
  {Springer}, 2015.

\bibitem{sommer_evolution_2015}
S.~Sommer.
\newblock Evolution {{Equations}} with {{Anisotropic Distributions}} and
  {{Diffusion PCA}}.
\newblock In F.~Nielsen and F.~Barbaresco, editors, {\em Geometric {{Science}}
  of {{Information}}}, number 9389 in Lecture {{Notes}} in {{Computer
  Science}}, pages 3--11. {Springer International Publishing}, 2015.

\bibitem{sommer_anisotropically_2016}
S.~Sommer.
\newblock Anisotropically {{Weighted}} and {{Nonholonomically Constrained
  Evolutions}} on {{Manifolds}}.
\newblock {\em Entropy}, 18(12):425, Nov. 2016.

\bibitem{sommer_infinitesimal_2018-1}
S.~Sommer.
\newblock An {{Infinitesimal Probabilistic Model}} for {{Principal Component
  Analysis}} of {{Manifold Valued Data}}.
\newblock {\em Sankhya A}, Aug. 2018.

\bibitem{sommer_modelling_2017}
S.~Sommer and A.~M. Svane.
\newblock Modelling anisotropic covariance using stochastic development and
  sub-{{Riemannian}} frame bundle geometry.
\newblock {\em Journal of Geometric Mechanics}, 9(3):391--410, June 2017.

\bibitem{StVa72}
D.~W. Stroock and S.~R.~S. Varadhan.
\newblock On the support of diffusion processes with applications to the strong
  maximum principle.
\newblock In {\em Proceedings of the {S}ixth {B}erkeley {S}ymposium on
  {M}athematical {S}tatistics and {P}robability ({U}niv. {C}alifornia,
  {B}erkeley, {C}alif., 1970/1971), {V}ol. {III}: {P}robability theory}, pages
  333--359. Univ. California Press, Berkeley, Calif., 1972.

\end{thebibliography}

\end{document}